\theoremstyle{plain}
\newtheorem{thm}{Theorem}[section]
\newtheorem{lem}[thm]{Lemma}
\newtheorem{prop}[thm]{Proposition}
\newtheorem{cor}[thm]{Corollary}
\theoremstyle{definition}
\newtheorem{defi}[thm]{Definition}
\newtheorem{ex}[thm]{Example}
\theoremstyle{remark}
\newtheorem{rem}[thm]{Remark}
\newcommand{\Ac}{\mathcal{A}}
\newcommand{\Cc}{\mathcal{C}}
\newcommand{\Hc}{\mathcal{H}}
\newcommand{\Sc}{\mathcal{S}}
\newcommand{\ebf}{\mathbf{e}}
\providecommand{\Fb}{\mathbb{F}}
\providecommand{\Qb}{\mathbb{Q}}
\providecommand{\Zb}{\mathbb{Z}}
\DeclareMathOperator{\rk}{rank}
\DeclareMathOperator{\crk}{corank}
\newcommand{\abs}[1]{\left\lvert #1\right\rvert}
\DeclareMathOperator{\im}{\mathrm{Im}} 
\DeclareMathOperator{\kerne}{\mathrm{Ker}} 
\title{A categorification for the characteristic polynomial of matroids}
\author{Takuya Saito}
\address{Institute for Chemical Reaction Design and Discovery, Hokkaido University, Kita 21 Nishi 10, Kita-ku, Sapporo, 001-0021 Japan}
\email{saito@icredd.hokudai.ac.jp}
\author{So Yamagata}
\address{Department of Applied Mathematics, Faculty of Science, Fukuoka University, Jonan-ku Nanakuma 8-19-1, Fukuoka, 814-0180 Japan}
\email{so.yamagata@fukuoka-u.ac.jp}
\subjclass[2020]{05B35, 05C15, 05C31, 57K18}
\keywords{categorification, characteristic polynomial, matroid}
\date{}
\begin{document}

\begin{abstract}
  In the present paper, we provide cohomology groups for matroids, as a categorification of the characteristic polynomial of matroids. 
       The construction depends on the ``quasi-representation'' of a matroid. 
       For a certain choice of the quasi-representation, we show that our cohomology theory gives a generalization of the chromatic cohomology introduced by L. Helme--Guizon and Y. Rong, and also the characteristic cohomology introduced by Z. Dancso and A. Licata.
\end{abstract}

\maketitle

\section{Introduction}
    Khovanov \cite{Kho} introduced a bigraded cohomology theory of links, whose graded Euler characteristic is the Jones polynomial. 
    In analogous fashion, several constructions of Khovanov-type cohomology theories have been provided beyond knot theory. 
    In \cite{HGR}, the chromatic polynomial of graphs is categorified, and resulting cohomology is known as the chromatic cohomology. 
    In analogy with Khovanov cohomology, its graded Euler characteristic yields the chromatic polynomial. 
    The deletion-contraction operation, a fundamental concept in graph theory, appears as long exact sequences of associated cohomology groups in the categorified context. 
    The properties of this cohomology have been studied \cite{HGPR,LS,PPS,SS}. 
    The second author \cite{Y} explored a splitting property of chromatic cohomology for a specific class of graphs and applied it to describe the cohomology of complete graphs. 
    Tutte cohomology, which categorifies the Tutte polynomial of graphs, was introduced in \cite{JHR}.
    The categorification of the chromatic polynomial of fat graphs is introduced in \cite{LM}. 
    In \cite{SY}, the categorification of Stanley's chromatic symmetric function is introduced.

    In \cite{DL}, three classes of polynomials related to hyperplane arrangements--the characteristic polynomial, the Tutte polynomial, and the Poincar\'{e} polynomial--were categorified. 
    Consider $\Ac = \{ H_1, \dots, H_n \}$, an arrangement of hyperplanes in a $k$-dimensional vector space $V$ over $\Bbbk$. 
    To a subset $S \subset [n] \coloneqq \{ 1, \dots, n \}$, they associated three vector spaces $H_S = \bigcap_{e \in S} H_e$, $V_S = \mathrm{span} \{ \nu_e \}_{e \in S}$ where $\nu_e \in V^\ast$ is the normal vector of $H_e$ and $V^\ast$ is the dual space of $V$, and $W_S = \{ w = (w_1, \dots, w_n) \in W \mid w_r = 0 \text{ for } r \notin S \}$, where $W = \{ (w_1, \dots, w_n) \in \Bbbk^n \mid \sum_e w_e \nu_e = 0 \}$. 
    Then, the chain groups were defined by taking exteriors $\bigwedge^\bullet H_S$, $\bigwedge^\bullet V_S$ and $\bigwedge^\bullet W_S$. 
    The differentials were defined as maps induced by three types of maps: for $s \in S$ and $r \notin S$, natural inclusions $H_S \hookrightarrow H_{S \setminus s}$, $V_S \hookrightarrow V_{S \cup r}$, $W_S \hookrightarrow W_{S \cup r}$, and orthogonal projection maps $H_S \twoheadrightarrow H_{S \cup r}$, $V_S \twoheadrightarrow V_{S \setminus s}$, $W_S \twoheadrightarrow W_{S \setminus s}$.

    The central concept treated in the present paper is that of a matroid. 
    A matroid is a structure that reflects the notion of abstract dependency, including cycles in graphs and linear dependency. 
    Therefore, we can obtain matroids from both graphs and arrangements of hyperplanes (see Example \ref{ExamOfMat}). 
    In the present paper, we provide cohomology groups for matroids, which unify and extend the chromatic cohomology of graphs and the characteristic cohomology of hyperplane arrangements.
    Our constructions are motivated by work on the characteristic cohomology of hyperplane arrangements \cite{DL}. 
    In constructing the cohomology groups, we introduce the notion of a ``quasi-representation'' of a matroid to a free $R$-module, where $R$ is a principal ideal domain with a unit. 
    A representation of a matroid can be described as a hyperplane arrangement whose associated matroid is the given one.
    However, infinitely many matroids admit no such representation.
    Unlike representations of matroids, a quasi-representation always exists for every matroid, even if it is non-representable. 
    Thus, our construction can be applied to all matroids.

    Note that our cohomology groups are defined for a matroid by means of the quasi--representation. 
    We show that the deletion-contraction of matroids induces a long exact sequence of the cohomologies of the associated matroids. 
    Unlike in the case of chromatic and characteristic (co)homologies, we need to separate the cases in which the deleted or contracted element is a coloop. 
    Theorem \ref{thm:LongExactSq} gives the long exact sequence for the former case, and Theorem \ref{thm:coloop} gives the one for the latter case. 
    We study how these cohomology groups behave under basic matroid operations, such as adding loops, parallel elements, taking direct sums, and the relaxation operation, which provides a way to construct a quasi-representation of a non-representable matroid.
    In particular, we will show that the cohomology of relaxations can be calculated using short exact sequences in Theorem \ref{thm:relax}.
    Some sample computations of the cohomology groups are also included. Finally, we consider the relation to other Khovanov-type cohomology theories.

    Let $H_{arr}^\bullet(\Ac,\Bbbk)$ be the characteristic cohomology of an essential arrangement $\Ac$ introduced in \cite{DL} with coefficients in $\Bbbk$, and $H^\bullet (M(\Ac), \rho)$ be the characteristic cohomology of the matroid $M(\Ac)$ associated with $\Ac$. 
    Then, we show the following theorem.
    
    \begin{thm}
    Let $H_{arr}^\bullet(\Ac,\Bbbk)$ be the characteristic cohomology of $\Ac$ over $\Bbbk$, and let $H^\bullet(M(\Ac),\rho)$ be the characteristic cohomology of the associated matroid with the above $\Bbbk$-quasi-representation. 
    Then
    \begin{equation*}
        H_{arr}^\bullet(\Ac,\Bbbk) \simeq H^\bullet(M(\Ac),\rho).
    \end{equation*}
    \end{thm}
    Let $H^\bullet(G)$ be the chromatic cohomology of a graph $G$ introduced in \cite{HGR}. 
    We show the existence of a homomorphism from $H^\bullet(G)$ to $H^\bullet(M(G))$ where $M(G)$ is the graphic matroid associated to $G$ (see Theorem \ref{thm: Homomorph}). 
    Moreover, we show that the following theorem applies if $G$ is a connected graph.
    \begin{thm}
        [Theorem \ref{cor:long_ex}] Let $G$ be a connected graph. 
        Then there is a long exact sequence
        \begin{equation*}
            \cdots \to H^{i,j-1}(M(G)) \to H^{i,j}(G) \to H^{i,j}(M(G)) \to H^{i+1,j-1}(M(G)) \to \cdots.
        \end{equation*}
    \end{thm}

    The paper is organized as follows. Section \ref{sec:matroid} reviews basic definitions and notions of matroid theory with some examples. 
    Section \ref{sec:construction} provides a construction of the characteristic cohomology groups beginning with defining a quasi-representation to a free $R$-module of rank $r$. 
    Section \ref{sec:exactseq} proves long exact sequences of the cohomology groups, from which the deletion-contraction formula for characteristic polynomials follows.
    Compared to other Khovanov-type (co)homologies associated with deletion-contraction phenomena \cite{DL,HGR}, our cohomology requires a distinction according to whether the deleted element is a coloop or not.
    Section \ref{sec:properties} investigates the basic properties of our cohomology groups. 
    The relation to the relaxation operation of matroids is also studied. 
    Section \ref{sec:computation} provides sample computations of our cohomology groups. 
    In particular, we compute the cohomology of uniform matroids with small numbers. 
    Finally, Section \ref{sec:relation} compares our cohomology group with two others: characteristic cohomology for hyperplane arrangements and chromatic cohomology for graphs.

\section*{Acknowledgement}
This work was supported by JSPS KAKENHI Grant Numbers JP23KJ0031, JP24K-16926 and JPJSBP120256504.

\section{Matroid and characteristic polynomial}\label{sec:matroid}
    In this section we review the basic notions in matroid theory. 
    For more details of the general matroid theory, see \cite{O} (see also \cite{BO} for the characteristic polynomials). 
    In the rest of the present paper, we denote the singleton as $e$ rather than $\{ e \}$ except in cases where this might create confusion.
    \begin{defi}
        A \textbf{matroid} $M$ is a pair $(E,\rk_M)$ of a finite set $E$ called the \textbf{ground set}, and a non-negative integer valued function $\rk_M: 2^{E} \to \Zb_{\geq 0}$ called the \textbf{rank function} of $M$, with the following properties:
        \begin{enumerate}
            \item $0 \leq \rk_M(S) \leq \abs{S}$;
            \item $S_1 \subset S_2 \subset E$ implies $\rk_M(S_1) \leq \rk_M(S_2)$;
            \item $\rk_M(S_1) + \rk_M(S_2) \geq \rk_M(S_1 \cap S_2) + \rk_M(S_1 \cup S_2)$ for all $S_1, S_2 \subset E$.
        \end{enumerate}
        The rank of a matroid $M$ is defined by the rank of its ground set, i.e., $\rk(M) = \rk_M(E)$.
    \end{defi}

    The \textbf{corank} of a subset $S$ in a matroid $M$ is defined by $\crk_M(S) = \rk(M) - \rk_M(S)$. 
    An element $e$ of the matroid $M$ is called \textbf{loop} if $\rk_M(e) = 0$ and \textbf{coloop} (sometimes called isthmus) if $\crk_M(E\setminus e) = 1$. 
    Moreover, if $e_1$ and $e_2$ are non-loop elements in the matroid $M$ such that $\rk_M(\{ e_1,e_2 \}) = 1$, then $e_1$ and $e_2$ are called \textbf{parallel}.

    Two matroids $M = (E,\rk_M)$ and $M' = (E',\rk_{M'})$ are \textbf{isomorphic} if there exists a bijection $\phi: E \to E'$ such that $\rk_M(S) = \rk_{M'}(\phi(S))$ for all $S \in 2^{E}$.

    \begin{ex}\label{ExamOfMat}\mbox{}
        \begin{itemize}
            \item The \textbf{uniform matroid} $U_{k,n}$ is a matroid on the ground set $\{ 1, \dots, n \}$. 
            The rank function of $U_{k,n}$ is defined by $\rk_{U_{k,n}}(S) = \min \{ k,\abs{S} \}$ for all subsets $S$ of the ground set.
            \item Let $G$ be a graph with a vertex set $V(G)$ and an edge set $E(G)$. 
            As edges we allow loops and multiple edges. 
            The \textbf{graphic matroid} $M(G)$ of $G$ is defined on the edge set $E(G)$ of $G$ and its rank function is defined by $\rk_{M(G)}(S) = \abs{V} - k([G;S])$ for all subsets $S$ of $E(G)$, where $[G; S]$, called the \textbf{spanning graph}, denotes the subgraph of $G$ with the same vertex set $V(G)$ and an edge set $S \subset E(G)$, and $k([G;S])$ denotes the number of connected components of the spanning graph $[G;S]$.
            \item Let $V$ be a vector space of dimension $d$. 
            A \textbf{hyperplane} is a linear subspace of dimension $d - 1$. 
            An \textbf{arrangement of hyperplanes} $\Ac$ is a finite collection of hyperplanes in $V$. 
            Then $\Ac$ is the ground set of a matroid, denoted $M(\Ac)$, whose rank function is defined by $\rk_{M(\Ac)}(S) = d - \dim \bigcap_{H \in S}H$ for $H \in S \subset \Ac$. 
            For the general theory of hyperplane arrangements, see also \textup{\cite{OT}}.
        \end{itemize}
    \end{ex}

    There are some fundamental operations on matroids. 
    Let $M = (E,\rk_M)$ and $M' = (E',\rk_{M'})$ be matroids on disjoint ground sets and let $e \in E$. 
    The \textbf{deletion} $M \setminus e$ and the \textbf{contraction} $M/e$ are matroids defined on $E \setminus e$ with the rank functions
    \begin{align*}
        \rk_{M \setminus e}(S) & = \rk_M(S);                   \\
        \rk_{M/e}(S)         & = \rk_M(S \cup e) - \rk_M(e)
    \end{align*}
    for all $S \in 2^{E \setminus \{ e \}}$. 
    The \textbf{direct sum} $M \oplus M'$ is the matroid defined on the ground set $E \cup E'$ with the rank function
    \begin{equation*}
        \rk_{M \oplus M'}(S) = \rk_M(S \cap  E) + \rk_{M'}(S \cap E')
    \end{equation*}
    for all $S \in 2^{E \cup E'}$. 
    A subset $S_0$ of $E$ is a \textbf{circuit-hyperplane} if 
    \begin{gather*}
        \rk_M(S_0) = \abs{S_0} - 1 = \rk_M(S_0 \setminus \{ e_1 \})\\
        \rk_M(S_0) + 1 = \rk(M) = \rk_M(S_0 \cup \{ e_2 \})
    \end{gather*}
    for all $e_1 \in S_0,\ e_2 \in E \setminus S_0$. 
    Then the following function $\rk_{M^-}$ on $E$ defined by
    \begin{equation*}
        \rk_{M^-}(S) = \left\{
        \begin{aligned}
            \rk_M(S) &  &  & (S \neq S_0), \\
            \rk_M(E) &  &  & (S = S_0),
        \end{aligned}
        \right.
    \end{equation*}
    satisfies the axioms of a matroid. 
    We say $M^-$ is obtained from $M$ by \textbf{relaxing} $S_0$, and $M^-$ is called the {relaxation} of $M$.

    One important example of relaxations is the non-Pappus matroid, which is obtained from the Pappus matroid by relaxing one collinearity.
    \begin{ex}\label{ex:Pappus}
        \begin{figure}
            \centering
            \begin{tabular}{cc}
                \begin{minipage}{0.4\linewidth}
                \centering 
                \begin{tikzpicture}[scale=1]
                \node[inner sep=0.2em, fill=black!100, circle] (1) at (-2,1) {};
                \node[inner sep=0.2em, fill=black!100, circle] (2) at (0,1) {};
                \node[inner sep=0.2em, fill=black!100, circle] (3) at (2,1) {};
                \node[inner sep=0.2em, fill=black!100, circle] (4) at (-1,0) {};
                \node[inner sep=0.2em, fill=black!100, circle] (5) at (0,0) {};
                \node[inner sep=0.2em, fill=black!100, circle] (6) at (1,0) {};
                \node[inner sep=0.2em, fill=black!100, circle] (7) at (-2,-1) {};
                \node[inner sep=0.2em, fill=black!100, circle] (8) at (0,-1) {};
                \node[inner sep=0.2em, fill=black!100, circle] (9) at (2,-1) {}; 
                \node (11) at (-2,1.3) {$1$};
                \node (22) at (0,1.3) {$2$};
                \node (33) at (2,1.3) {$3$};
                \node (44) at (-1,0.3) {$4$};
                \node (55) at (0,0.3) {$5$};
                \node (66) at (1,0.3) {$6$};
                \node (77) at (-2,-1.3) {$7$};
                \node (88) at (0,-1.3) {$8$};
                \node (99) at (2,-1.3) {$9$}; 
                \begin{scope}[line width = 1pt]
                \draw (1)--(3); 
                \draw (4)--(6);
                \draw (7)--(9);
                \draw (1)--(8);
                \draw (1)--(9);
                \draw (2)--(7);
                \draw (2)--(9);
                \draw (3)--(7);
                \draw (3)--(8);
                \end{scope}
                \end{tikzpicture}\caption{Pappus matroid}\label{fig:Pappus}
                \end{minipage} & 
                \begin{minipage}{0.4\linewidth}
                \centering \begin{tikzpicture}[scale=1]
                \node[inner sep=0.2em, fill=black!100, circle] (1) at (-2,1) {};
                \node[inner sep=0.2em, fill=black!100, circle] (2) at (0,1) {};
                \node[inner sep=0.2em, fill=black!100, circle] (3) at (2,1) {};
                \node[inner sep=0.2em, fill=black!100, circle] (4) at (-1,0) {};
                \node[inner sep=0.2em, fill=black!100, circle] (5) at (0,0) {};
                \node[inner sep=0.2em, fill=black!100, circle] (6) at (1,0) {};
                \node[inner sep=0.2em, fill=black!100, circle] (7) at (-2,-1) {};
                \node[inner sep=0.2em, fill=black!100, circle] (8) at (0,-1) {};
                \node[inner sep=0.2em, fill=black!100, circle] (9) at (2,-1) {}; 
                \node (11) at (-2,1.3) {$1$};
                \node (22) at (0,1.3) {$2$};
                \node (33) at (2,1.3) {$3$};
                \node (44) at (-1,0.3) {$4$};
                \node (55) at (0,0.3) {$5$};
                \node (66) at (1,0.3) {$6$};
                \node (77) at (-2,-1.3) {$7$};
                \node (88) at (0,-1.3) {$8$};
                \node (99) at (2,-1.3) {$9$}; 
                \begin{scope}[line width = 1pt]
                \draw (1)--(3);
                \draw (7)--(9);
                \draw (1)--(8);
                \draw (1)--(9);
                \draw (2)--(7);
                \draw (2)--(9);
                \draw (3)--(7);
                \draw (3)--(8);
                \end{scope}
                \end{tikzpicture} \caption{non-Pappus matroid} \label{fig:nonPappus}\end{minipage}
            \end{tabular}
        \end{figure}
        Let $E = \{ 1, \dots, 9 \}$ and let
        \begin{equation*}
            \Sc = \{ 123, 148, 159, 247, 269, 357, 368, 456, 789 \}.
        \end{equation*}
        Define $\rk:2^{E} \to \Zb_{\geq 0}$ as follows:
        \begin{equation*}
            \rk(S) = \left\{
            \begin{aligned}
                 & \abs{S} &  & \abs{S} \leq 2,    \\
                 & 2       &  & S\in \Sc, \\
                 & 3       &  & \mbox{others}.
            \end{aligned}\right.
        \end{equation*}
        Then $M = (E, \rk)$ satisfies the axioms of a matroid and is called the Pappus matroid. 
        The set of circuit-hyperplane in the Pappus matroid is $\Sc$ and it corresponds to the lines in Figure \ref{fig:Pappus}. 
        Relaxation of the Pappus matroid is independent of the choice of relaxing the circuit-hyperplane up to isomorphism. 
        The non-Pappus matroid is the matroid obtained from $M$ by relaxing $\{ 4, 5, 6 \}$, and belongs to a class of matroids, called non-representable, consisting of matroids which cannot be obtained from any arrangements of hyperplanes over commutative fields. 
        Note that we can still represent the non-Pappus matroid over a skew field (see Section 6.1 in \cite{O} and also Section 3.5 in \cite{PVZ} for the representation matrix).
    \end{ex}

    The following proposition follows immediately from the definitions.

    \begin{prop}\label{prop:corank} 
    Let $M$ and $M'$ be matroids on disjoint ground sets $E$ and $E'$, and let $e \in E$, $S \subset E \setminus e$, $S' \subset E'$.
        \begin{itemize}
            \item $\crk_{M \setminus e}(S) = \crk_M(S) - \crk_M(E \setminus e)$;
            \item $\crk_{M/e}(S) = \crk_M(S \cup e)$;
            \item $\crk_{M \oplus M'}(S \cup S') = \crk_M(S) + \crk_{M'}(S')$.
        \end{itemize}
    \end{prop}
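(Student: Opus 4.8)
The plan is to reduce all three identities to the definition \(\corank_M(S)=\rank(M)-\rank_M(S)\) together with the explicit rank functions of $M\setminus e$, $M/e$, and $M\oplus M'$ recorded just above the proposition. The only point requiring a moment's care is bookkeeping the total rank of each constructed matroid; once that is pinned down, each claim is a one-line substitution.

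First I would handle the deletion. Since $\rank_{M\setminus e}(S)=\rank_M(S)$ for every $S\subset E\setminus e$, taking $S=E\setminus e$ gives $\rank(M\setminus e)=\rank_M(E\setminus e)=\rank(M)-\corank_M(E\setminus e)$. Substituting into the definition of corank,
\[
\corank_{M\setminus e}(S)=\rank(M\setminus e)-\rank_{M\setminus e}(S)=\bigl(\rank(M)-\corank_M(E\setminus e)\bigr)-\rank_M(S)=\corank_M(S)-\corank_M(E\setminus e),
\]
which is the first bullet. Next, for the contraction, $\rank_{M/e}(S)=\rank_M(S\cup e)-\rank_M(e)$, and in particular $\rank(M/e)=\rank_M(E)-\rank_M(e)$; hence
\[
\corank_{M/e}(S)=\bigl(\rank_M(E)-\rank_M(e)\bigr)-\bigl(\rank_M(S\cup e)-\rank_M(e)\bigr)=\rank_M(E)-\rank_M(S\cup e)=\corank_M(S\cup e),
\]
giving the second bullet. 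Finally, for the direct sum, $\rank_{M\oplus M'}(S\cup S')=\rank_M(S)+\rank_{M'}(S')$, so $\rank(M\oplus M')=\rank_M(E)+\rank_{M'}(E')$, and therefore
\[
\corank_{M\oplus M'}(S\cup S')=\bigl(\rank_M(E)+\rank_{M'}(E')\bigr)-\bigl(\rank_M(S)+\rank_{M'}(S')\bigr)=\corank_M(S)+\corank_{M'}(S'),
\]
which is the third bullet.

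There is essentially no obstacle here: the statement follows immediately from the definitions, as the paper already anticipates. The only thing worth flagging explicitly in the write-up is that, unlike contraction and direct sum, deletion can drop the total rank (precisely when $e$ is a coloop), and that this potential drop is exactly the term $\corank_M(E\setminus e)$ appearing in the first identity; making this remark keeps the asymmetry among the three formulas transparent.
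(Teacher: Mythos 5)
Your proof is correct and coincides with the paper's intent: the paper offers no written proof, stating only that the proposition ``follows immediately from the definitions,'' and your argument is exactly that direct substitution of $\corank_M(S)=\rank(M)-\rank_M(S)$ into the rank formulas for $M\setminus e$, $M/e$, and $M\oplus M'$. Your closing remark about the rank drop for deletion being precisely $\corank_M(E\setminus e)$ (i.e.\ when $e$ is a coloop) is a nice, accurate observation but not needed.
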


    \begin{defi}
        Let $M$ be a matroid $M$ on ground set $E$. 
        The \textbf{characteristic polynomial} $\chi(M;\lambda)$ of $M$ is defined by
        \begin{equation*}
            \chi(M;\lambda) = \sum_{S \in 2^E}(-1)^{\abs{S}} \lambda^{\crk(S)}.
        \end{equation*}
    \end{defi}

    We close the subsection with well-known properties of the characteristic polynomials.

    \begin{prop}
        Let $G=(V(G),E(G))$ be a graph and $\lambda \in \mathbb{N}$. 
        A $\lambda$-coloring of $G$ is a set map $f:  \to [\lambda]$ satisfying the condition that for every edge, its distinct end vertices $v, w$ have distinct values $f(v), f(w)$. 
        Let $P(G;\lambda)$ be the number of $\lambda$-colorings of $G$, which actually turns out to be a polynomial called the \textbf{chromatic polynomial} of $G$. 
        Then
        \begin{equation*}
            P(G;\lambda) = \lambda^{k(G)} \chi(M(G);\lambda)
        \end{equation*}
        where $k(G)$ is the number of connected components of $G$.
    \end{prop}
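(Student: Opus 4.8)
The plan is to establish the ``subgraph expansion'' of the chromatic polynomial by inclusion--exclusion and then translate it through the rank function of the graphic matroid. First I would read off from the definition of $M(G)$ in Example \ref{ExamOfMat} that $\rank(M(G)) = \rank_{M(G)}(E(G)) = \abs{V(G)} - k(G)$ and $\rank_{M(G)}(S) = \abs{V(G)} - k([G;S])$, so that
\begin{equation*}
    \corank_{M(G)}(S) = k([G;S]) - k(G) \qquad \text{for every } S \subseteq E(G).
\end{equation*}
Plugging this into the definition of the characteristic polynomial gives
\begin{equation*}
    \lambda^{k(G)}\chi(M(G);\lambda) = \sum_{S \subseteq E(G)} (-1)^{\abs{S}}\,\lambda^{k([G;S])},
\end{equation*}
so the task reduces to showing that the right-hand side equals $P(G;\lambda)$.

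For this I would apply inclusion--exclusion on the set $[\lambda]^{V(G)}$ of all maps $f\colon V(G) \to [\lambda]$. For an edge $e$ with end vertices $u,v$ put $X_e = \{\, f \mid f(u) = f(v)\,\}$; a map is a $\lambda$-coloring precisely when it lies in $[\lambda]^{V(G)} \setminus \bigcup_{e \in E(G)} X_e$, so
\begin{equation*}
    P(G;\lambda) = \sum_{S \subseteq E(G)} (-1)^{\abs{S}} \Bigl|\, \bigcap_{e \in S} X_e \,\Bigr|.
\end{equation*}
The one substantive point is that $\bigcap_{e \in S} X_e$ is exactly the set of maps that are constant on each connected component of the spanning subgraph $[G;S]$, and there are $\lambda^{k([G;S])}$ of those. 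Comparing the two displays yields $P(G;\lambda) = \lambda^{k(G)}\chi(M(G);\lambda)$, which in particular re-proves that $P(G;\lambda)$ is a polynomial in $\lambda$.

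I do not expect a genuine obstacle; the only care needed is the combinatorial bookkeeping of connected components --- isolated vertices count as components, and a loop $e$ contributes $X_e = [\lambda]^{V(G)}$ and leaves every $k([G;S])$ unchanged, which matches $\chi(M(G);\lambda)=0$ whenever $G$ has a loop --- together with the standard remark that an identity on $\mathbb{N}$ between a counting function and a fixed polynomial forces the counting function to coincide with that polynomial. One could instead induct via deletion--contraction, but that would require separating the coloop case and tracking the changes in $k(G)$, so the inclusion--exclusion route is more economical.
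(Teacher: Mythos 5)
Your argument is correct and complete. The translation $\corank_{M(G)}(S)=k([G;S])-k(G)$ is immediate from the definition of the graphic matroid, and the inclusion--exclusion step is sound: $\bigcap_{e\in S}X_e$ is precisely the set of maps constant on each component of $[G;S]$, giving $\lambda^{k([G;S])}$ such maps, so both sides equal Whitney's subgraph expansion $\sum_{S\subseteq E(G)}(-1)^{\abs{S}}\lambda^{k([G;S])}$. Note that the paper states this proposition as a well-known fact and gives no proof of its own (it defers to the references on matroids and characteristic polynomials), so there is nothing in the paper to compare against; your route is the standard one. Your handling of the loop case is the right thing to flag: for the stated identity to hold one must use the convention that a loop forbids all colorings (so $P(G;\lambda)=0$, matching $\chi(M(G);\lambda)=0$), which is exactly what your definition of $X_e$ encodes; under a literal reading of ``distinct end vertices'' in the paper's wording the loop would impose no condition and the identity would fail, so your interpretation is the one that makes the statement true.
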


    \begin{prop}
        Let $\Ac$ be an arrangement of hyperplanes. 
        Define its \textbf{characteristic polynomial} by
        \begin{equation*}
            \chi(\Ac;\lambda) = \sum_{S \subset \Ac}(-1)^{\abs{S}} \lambda^{\dim H_S}
        \end{equation*}
        where $H_S = \bigcap_{H \in S} H$. 
        Then,
        \begin{equation*} \chi(\Ac;\lambda) = \lambda^{\dim \bigcap_{H \in \Ac} H} \chi(M(\Ac);\lambda).
        \end{equation*}
    \end{prop}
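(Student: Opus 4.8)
The plan is to match the exponents in the two defining sums term by term. Write $d=\dim V$ and $H_{\mathcal{A}}=\bigcap_{H\in\mathcal{A}}H$. By the definition of the matroid $M(\mathcal{A})$ in Example~\ref{ExamOfMat}, for every $S\subset\mathcal{A}$ we have $\rank_{M(\mathcal{A})}(S)=d-\dim H_S$, where $H_S=\bigcap_{H\in S}H$ (with the convention that the empty intersection is all of $V$, so $H_\emptyset=V$). Taking $S=\mathcal{A}$ gives in particular $\rank(M(\mathcal{A}))=d-\dim H_{\mathcal{A}}$.

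Next I would compute the corank. For $S\subset\mathcal{A}$,
\begin{equation*}
\corank_{M(\mathcal{A})}(S)=\rank(M(\mathcal{A}))-\rank_{M(\mathcal{A})}(S)=(d-\dim H_{\mathcal{A}})-(d-\dim H_S)=\dim H_S-\dim H_{\mathcal{A}},
\end{equation*}
equivalently $\dim H_S=\corank_{M(\mathcal{A})}(S)+\dim H_{\mathcal{A}}$. Substituting this identity into the definition of $\chi(\mathcal{A};\lambda)$ and pulling the constant factor $\lambda^{\dim H_{\mathcal{A}}}$ out of the sum gives
\begin{equation*}
\chi(\mathcal{A};\lambda)=\sum_{S\subset\mathcal{A}}(-1)^{|S|}\lambda^{\dim H_S}=\lambda^{\dim H_{\mathcal{A}}}\sum_{S\subset\mathcal{A}}(-1)^{|S|}\lambda^{\corank_{M(\mathcal{A})}(S)}=\lambda^{\dim H_{\mathcal{A}}}\chi(M(\mathcal{A});\lambda),
\end{equation*}
since the ground set of $M(\mathcal{A})$ is $\mathcal{A}$, so the last sum is exactly the defining sum of the characteristic polynomial of the matroid.

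There is no genuine obstacle here: the argument is a direct unwinding of definitions. The only points that require a moment's care are the convention that the empty intersection equals $V$ (so that $\rank_{M(\mathcal{A})}(\emptyset)=0$, consistent with the matroid axioms) and the identification $\rank(M(\mathcal{A}))=d-\dim H_{\mathcal{A}}$, which is just the $S=\mathcal{A}$ instance of the rank formula. This mirrors exactly the proof of the preceding proposition relating the chromatic polynomial of a graph to the characteristic polynomial of its graphic matroid, with the factor $\lambda^{\dim H_{\mathcal{A}}}$ playing the role of $\lambda^{k(G)}$.
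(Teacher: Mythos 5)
Your argument is correct: with the paper's definition $\chi(M;\lambda)=\sum_{S\in 2^E}(-1)^{\abs{S}}\lambda^{\corank(S)}$ and the rank formula $\rank_{M(\mathcal{A})}(S)=d-\dim H_S$, the identity $\dim H_S=\corank_{M(\mathcal{A})}(S)+\dim H_{\mathcal{A}}$ makes the two sums match term by term after factoring out $\lambda^{\dim H_{\mathcal{A}}}$. The paper states this proposition without proof as a well-known fact, and your direct unwinding of the definitions is exactly the intended argument.
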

    \begin{prop}\label{prop:CharPoly} 
        Let $M$ be a matroid on $E$. 
        Then, the following hold.
        \begin{enumerate}
            \item If $E = \emptyset$, then $\chi(M;\lambda) = 1$.
            \item If $e \in E$ is not a coloop, then $\chi(M;\lambda) = \chi(M \setminus e;\lambda) - \chi(M/e;\lambda)$.
            \item If $e \in E$ is a coloop, then $\chi(M;\lambda) = (\lambda - 1) \chi(M/e;\lambda)$.
            \item If $e \in E$ is a loop, then $\chi(M;\lambda) = 0$.
            \item If $e, e' \in E$ are parallel, then $\chi(M;\lambda) = \chi(M \setminus e;\lambda)$.
            \item For matroids $M_1, M_2$, $\chi(M_1 \oplus M_2;\lambda) = \chi(M_1;\lambda) \chi(M_2;\lambda)$.
            \item Let $M^{-}$ be obtained from $M$ by relaxing a circuit-hyperplane of $M$. 
            Then, $\chi(M^-;\lambda) = \chi(M;\lambda) + (-1)^{\rk(M) - 1}(\lambda - 1)$.
        \end{enumerate}
    \end{prop}
    \section{Construction of the characteristic cohomology for matroids}\label{sec:construction} 
    In this section, we construct the characteristic cohomology for matroids. 
    The construction of the chain groups and differentials are inspired by the work \cite{DL}.

    For a matroid $M$ of rank $r$ on the ground set $E = \{ 1, \dots, n \}$ with the order $1 < \dots < n$, we write the ground set as $E = \{ 1 < \dots < n \}$ for simplicity.

    Suppose $R$ is a principal ideal domain with a unit. 
    For a finitely generated $R$-module $N$, define $\rk_R N$ as the dimension of $N \otimes_R \Bbbk$ where $\Bbbk$ is the quotient field of $R$. 
    It is well-known that there exists an isomorphism $N \cong R^r \oplus R/(c_{r+1}) \oplus \dots \oplus R/(c_{r+t})$ for some $r$ and $c_{r+1}, \dots, c_{r+t} \in R$, where $c_{r+l}$ divides $c_{r+l+1}$ for $l = 1, \cdots, t - 1$. 
    We take the generators by $\bar{\ebf}_1, \dots, \bar{\ebf}_r, \bar{\ebf}_{r+1}, \dots, \bar{\ebf}_{r+t}$ where $\bar{\ebf}_1, \dots, \bar{\ebf}_r$ generate the free part of $N$ and $\bar{\ebf}_{r+l}$ generates the torsion part of $N$ isomorphic to $R/(c_{r+l})$.
    \subsection{Quasi-representation of matroids}
    In this subsection, we introduce some representations of matroids. 
    The construction is inspired by the study \cite{FM}, in which the authors introduce the notion of a matroid over a commutative ring which assigns to every subset of the ground set an $R$-module.

    First, we give a quick view of the representation of matroids.
    \begin{defi}
        Let $\Bbbk$ be a field. 
        A $\Bbbk$-\textbf{representation} of a matroid $M$ is the map $\bar{\rho}$ from a ground set $E$ to a vector space $N$ with the condition $\rk_M(S) = \dim_{\Bbbk} \left< \bar{\rho}(e) \mid e \in S \right>$ for all subsets $S \subset  E$. 
        A matroid $M$ is \textbf{$\Bbbk$-representable} if there exists a $\Bbbk$-representation of $M$.
    \end{defi}

    The following generalizes the notion of the $\Bbbk$-representation.
    \begin{defi}
        An $R$-\textbf{quasi-representation}, or simply quasi-representation if we do not need to specify $R$, of matroid $M$ is a map $\rho$ assigning to each subset $S \in 2^E$ a submodule $\rho(S)$ of $N$ such that:
        \begin{itemize}
            \item $\rk_R \rho(S) = \rk_M(S)$, $\rho(\emptyset) = 0$, and $\rho(E) = N$;
            \item if $S_1 \subset S_2 \subset E$, then $\rho(S_1)\subset \rho(S_2)$;
            \item $\rho(S_1) = \rho(S_2)$ whenever $\rk(S_1) = \rk(S_2)$ and $S_1\subset S_2$.
        \end{itemize}
    \end{defi}

    We denote the matroid with quasi-representation by $(M,\rho) = (E,\rk_M, \rho)$.

    \begin{rem}
        A quasi-representation is a functor from the Boolean lattice $2^E$ to the category $R$-mod of finitely generated $R$-modules, which sends morphisms to inclusions and satisfies the appropriate rank conditions.
        Another similar notion of quasi-representation is discussed in \cite{BEST}.
    \end{rem}

    \begin{rem}
        We define the characteristic polynomial of a matroid with a fixed quasi-representation by the characteristic polynomial of the underlying matroid.
    \end{rem}
    Quasi-representations $\rho,\rho'$ of $M$ on ground set $E$ are \textbf{equivalent} if there exists a module isomorphism $\psi$ between $\rho(E)$ and $\rho'(E)$ with the condition that for all subsets $S \subset E$, $\psi(\rho(S)) = \rho'(S)$.
    That is, the assignment $\rho$ defines a functor from the Boolean lattice $2^E$ to the poset of submodules of $N$, and the following diagram commutes.
    \begin{equation*}
        \xymatrix{ & S \ar@{|->}[dl]_{\rho} \ar@{|->}[dr]^{\rho'} & \\ \rho(S) \ar@{|->}[rr]_{\psi} & & \rho'(S) }
    \end{equation*}

    Let us define the operations on matroids with quasi-representations.
    \begin{defi}
        Let $\rho$ and $\rho'$ be quasi-representations of matroids $M$ and $M'$ on disjoint ground sets $E$ and $E'$, respectively, and $e \in E$. 
        Then,
        \begin{itemize}
            \item the deletion $(M \setminus  e,\rho_{\setminus e})$ of $(M,\rho)$ is the matroid of the deletion of $M$ by $e$ with the quasi-representation $\rho_{\setminus e}$ assigning to each subset $S \in 2^{E \setminus e}$ a submodule $\rho(S)$ of $\rho(E \setminus e)$;
            \item the contraction $(M/e,\rho_{/e})$ of $(M,\rho)$ is the matroid of the contraction of $M$ by $e$ with the quasi-representation $\rho_{/e}$ assigning to each subset $S \in 2^{E \setminus e}$ the submodule $\rho(S \cup e)/\rho(e)$ of $\rho(E)/\rho(e)$;
            \item the direct sum $(M \oplus M', \rho \oplus \rho')$ of $(M,\rho)$ and $(M',\rho')$ is the matroid of the direct sum $M \oplus M'$ with the quasi-representation $\rho \oplus \rho'$ assigning to each subset $S\in 2^{E \cup E'}$ a submodule $\rho(S \cap  E) \oplus \rho'(S \cap E')$ of $\rho(E) \oplus \rho'(E')$.
        \end{itemize}
        In addition, we define the relaxation of $\rho$. 
        Let $M^-$ be obtained from $M$ by relaxing a circuit-hyperplane $S_0$ of $M$. 
        Then,
        \begin{itemize}
            \item the relaxation $(M^-,\rho^-)$ is the matroid of $M^-$ with the quasi-representation $\rho^-$ satisfying $\rho^-(S_0) = \rho(E)$ and $\rho^-(S) = \rho(S)$ if $S \neq S_0$.
        \end{itemize}
    \end{defi}

    It is easy to see that these operations are well-defined. Hereinafter, the term matroid will be used not only for a matroid $M$ but also a matroid $(M,\rho)$ with a fixed quasi-representation $\rho$.

    All of the matroids in Example \ref{ExamOfMat} are $\Bbbk$-representable for some $\Bbbk$, but there are infinitely many non-representable matroids which are not $\Bbbk$-representable for any $\Bbbk$ (see \cite{N}). 
    Some examples of non-representable matroids are the non-Pappus matroid in \ref{ex:Pappus}, the non-Desargues matroid and the V\'amos cube (for details see \cite{O}). 
    However, any matroid $M$ has a quasi-representation. Some non-representable matroids are obtained from representable matroids by relaxing circuit-hyperplanes $S_0$--e.g., the Pappus matroid and the non-Pappus matroid. 
    In this case, we can choose a quasi-representation $\rho$ such that $\rho(S_0) = N$, and for any other subset $S \neq S_0$, the value $\rho(S)$ is given by the submodule determined by a fixed quasi-representation of $M$. 
    In more general cases, quasi-representations are chosen as follows. 
    For a subset $S \subset E$, define a free module $\rho(S) = \left< \ebf_1, \dots, \ebf_{\rk_M(S)} \right>_r$.

    A matroid which is $\Bbbk$-representable for all fields $\Bbbk$ is called a
    \textbf{regular} matroid.
    It is well known that every regular matroid admits a representation by a
    totally unimodular matrix over $\Zb$.
    Recall that a matrix is totally unimodular if all of its square subdeterminants
    belong to $\{ -1, 0, 1 \}$.
    Regular matroids have the following uniqueness property.
    \begin{prop}[Proposition 6.6.5 in \cite{O}]
    Let $M$ be an $\Fb_2$-representable matroid and let $\Bbbk$ be a field.
    If $M$ is $\Bbbk$-representable, then all $\Bbbk$-representations of $M$ are
    projectively equivalent. In particular, $M$ is uniquely $\Bbbk$-representable.
    \end{prop}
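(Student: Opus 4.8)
The plan is to reproduce the classical argument of Brylawski and Lucas. The first observation is that the $\mathbb{F}_2$-case is essentially rigid: $\mathbb{F}_2$ has no nontrivial automorphism, and once a basis $B=\{1<\dots<r\}$ of $M$ is fixed, a $\mathbb{F}_2$-representation put in standard form $[\,I_r\mid D\,]$ (with the identity block on $B$) has every entry of $D$ forced --- it is $1$ on the support $C_M(e,B)\setminus e$ of the fundamental circuit of each $e\notin B$ and $0$ elsewhere. So the content of the statement lies in uniqueness over an arbitrary field $\Bbbk$. Accordingly I would fix a basis $B$, take two $\Bbbk$-representations $A_1,A_2$ of $M$, and use invertible row operations to bring both to standard form $A_k=[\,I_r\mid D_k\,]$ with identity block on $B$. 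Since the support of the $e$-th column of $D_k$ equals $C_M(e,B)\setminus e$ for every $e\notin B$, and this depends only on $M$, the matrices $D_1$ and $D_2$ have the same zero pattern.

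Second, I would reduce projective equivalence to diagonal scaling. Because the image of the identity block must again consist of scalar multiples of unit vectors, the row transformation relating two standard forms is forced to be diagonal; hence $[\,I_r\mid D_1\,]$ and $[\,I_r\mid D_2\,]$ are projectively equivalent exactly when $D_2=RD_1C$ for invertible diagonal matrices $R$ and $C$, in which case $A_2=RA_1\,\mathrm{diag}(R^{-1},C)$. Now attach to the common zero pattern the bipartite graph $H$ on $B\sqcup(E\setminus B)$ with an edge $\{i,e\}$ of weight $(D_k)_{ie}$ whenever $(D_k)_{ie}\neq 0$. The operation $D\mapsto RDC$ multiplies the weight of $\{i,e\}$ by $R_iC_e$, so the alternating products of edge weights around the cycles of $H$ are unchanged; conversely a gauge-fixing argument --- choose a spanning forest of $H$, solve for $R,C$ so that on the forest the weights of $RD_1C$ agree with those of $D_2$, then observe each remaining edge weight is determined and matches iff the corresponding fundamental cycle product matches --- shows that $D_1$ and $D_2$ differ by such a scaling if and only if all cycle products of $H$ agree, and it suffices to compare these on a cycle basis.

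Third, and this is where binariness enters, I would show every cycle product of $H$ equals $1$, for any $\Bbbk$-representation. By Tutte's excluded-minor characterization (see \cite{O}), $M$ being $\mathbb{F}_2$-representable means $M$ has no $U_{2,4}$-minor. Given a chordless $4$-cycle on $i,j\in B$ and $e,f\notin B$ (in cyclic order $i,e,j,f$), contracting $B\setminus\{i,j\}$ and deleting $(E\setminus B)\setminus\{e,f\}$ --- in the matrix, deleting the corresponding rows and columns --- yields the minor of $M$ represented by $\left(\begin{smallmatrix}1&0&D_{ie}&D_{if}\\0&1&D_{je}&D_{jf}\end{smallmatrix}\right)$; the four entries $D_{ie},D_{if},D_{je},D_{jf}$ are nonzero by choice of the cycle, so the four columns are nonzero with $i,j$ not parallel and neither of $e,f$ parallel to $i$ or $j$. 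As $M$ has no $U_{2,4}$-minor, $e$ and $f$ must be parallel, i.e. $D_{ie}D_{jf}=D_{if}D_{je}$, which is exactly the assertion that this $4$-cycle product equals $1$. A general cycle is then reduced to a combination of $4$-cycles by a sequence of pivots (basis exchanges), each of which only changes the standard form --- hence preserves all cycle products --- while sending minors of $M$ to minors of $M$; carrying out this reduction cleanly is the main technical obstacle I anticipate. Granting it, the cycle products of $D_1$ and $D_2$ coincide (all equal $1$), so $D_2=RD_1C$ and $A_1,A_2$ are projectively equivalent. If instead $M$ has no $\Bbbk$-representation the statement is vacuous, and the two cases together say that $M$ is uniquely $\Bbbk$-representable.
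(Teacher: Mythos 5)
The paper offers no argument for this proposition at all: it is quoted verbatim from Oxley \cite{O} (the Brylawski--Lucas uniqueness theorem), so there is no in-paper proof to compare against, and what you are really doing is reconstructing the cited classical argument. Your first two steps are fine (standard form, reduction of projective equivalence to $D_2=RD_1C$ with $R,C$ diagonal, and the spanning-forest gauge argument on the bipartite support graph $H$), and your $4$-cycle step is correct: absence of a $U_{2,4}$-minor forces $D_{ie}D_{jf}-D_{if}D_{je}=0$ whenever all four entries are nonzero. But the step you yourself flag as ``the main technical obstacle'' is a genuine gap, and worse, the statement you plan to prove there is false: it is not true that every cycle product of $H$ equals $1$. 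Take $M(K_4)$ with the star at a vertex as basis; then $H$ is a chordless hexagon, and the $3\times 3$ matrix $D$ with that support must be singular because the three non-basis edges form a triangle, hence are dependent. The determinant has exactly two nonzero terms, and its vanishing says precisely that the alternating product around the hexagon equals $-1$ in characteristic $\neq 2$. Since this $H$ contains no $4$-cycles at all, no product-preserving ``reduction by pivots'' of long cycles to $4$-cycles can exist, so the plan as stated cannot be completed.

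What the gauge argument actually requires is only that the alternating product around each cycle is \emph{determined by $M$} (hence the same for $D_1$ and $D_2$), not that it equals $1$, and this can be proved directly, closing the gap without any pivoting: for an induced cycle of length $2k$ in $H$, the corresponding $k\times k$ submatrix of $D$ has exactly two nonzero diagonals; over $\mathbb{F}_2$ its determinant is $1+1=0$, so the $k$ non-basis elements on the cycle are dependent in the corresponding minor of $M$; dependence is a matroid invariant, so the same $k\times k$ determinant vanishes in \emph{every} $\Bbbk$-representation, which pins the cycle product to a sign $\pm 1$ depending only on $k$ (equal to $+1$ exactly for $k=2$, which is why your $4$-cycle computation gave $1$). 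Induced cycles span the cycle space of $H$, so the edgewise ratio $w=D_2/D_1$ has trivial alternating product around every cycle, and your spanning-forest normalization then produces the diagonal matrices $R,C$ with $D_2=RD_1C$. With this replacement (or by following Oxley's own inductive normalization), the argument becomes complete; as written, the proposal is incomplete at its central step and asserts an incorrect intermediate claim.
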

    One family of regular matroids is the class of graphic matroids.
    Let $G=(V(G),E(G))$ be a simple graph.
    After choosing an orientation of each edge, the $\Bbbk$-representation
    $\bar{\rho}$ of $M(G)$ is defined by
    $\bar{\rho}(e) = v - v' \in \bigoplus_{v \in V(G)} \Bbbk v$
    for an edge $e$ oriented from $v'$ to $v$.
    
    Let $M$ be a regular matroid of rank $r$ on $E$.
    Choose a totally unimodular matrix representing $M$ over $\Qb$, and write $\bar{\rho}: E \to \Zb^r\subset \Qb^r$ for the map assigning to each element of $E$ the corresponding column vector.
    For a principal ideal domain $R$, set $N = \Zb^r \otimes_{\Zb} R \cong R^r$.
    Then we define an $R$-quasi-representation $\rho_R$ of $M$ by $\rho_R(S) = \left< \bar{\rho}(e) \mid e \in S \right>_{\Zb} \otimes R \subset N$ for each $S\subset E$.
    Since the representing matrix is totally unimodular, the rank of each submatrix
    is preserved after tensoring with arbitrary fields.
    Hence we have $\rho_R (E) = N$.
    Therefore $\rho_R$ is an $R$-quasi-representation of $M$.
    We call $\rho_R$ the \textbf{canonical} $R$-quasi-representation.
    If the quasi-representation $\rho$ is canonical, we abbreviate $(M,\rho)$
    to $M$.

    \subsection{The construction of the characteristic cohomology for matroids}
    We construct the chain complex $\Cc = (C^\bullet(M,\rho), d_{(M,\rho)}^\bullet)$ in this subsection. 
    Let $A = \bigoplus_j A_j$ be a graded $R$-module, where $A_j$ denotes the set of homogeneous elements with degree $j$. 
    We call the power series $q \mathchar`-\! \rk A = \sum_j q^j \rk A_j$ the \textbf{graded rank} of $A$.

    For graded $R$-modules $A = \bigoplus_j A_j$ and $B = \bigoplus_j B_j$ where $A_j$ (resp.\ $B_j$) denotes the set of elements of $A$ (resp.\ $B$) of degree $j$, an $R$-module map $\alpha: A \to B$ is called \textbf{graded with degree $d$} if $\alpha(A_j) \subset B_{j+d}$. 
    A chain complex is called a \textbf{graded chain complex} if the chain groups are graded $R$-modules and the differentials are graded.
    \begin{defi}
        The \textbf{graded Euler characteristic} $\chi_{q}(\Cc)$ of a graded chain complex $\Cc$ is defined by
        \begin{equation}
            \chi_{q}(\Cc) = \sum_{0 \leq i \leq n}(-1)^i q \mathchar`-\!\rk C^{i,\bullet}.
        \end{equation}
    \end{defi}

    As a first step, we construct the chain groups. 
    Let $S$ be a subset of $E$. 
    For a matroid with a quasi-representation $(M,\rho)$, we define the module $C^{S,j}(M,\rho)$ as an exterior algebra of the quotient group of $N = \rho(E)$ by the image of the quasi-representation of $S$; i.e., we define it as
    \begin{equation*}
        C^{S,j}(M,\rho) = \bigwedge^j \left( N/\rho(S) \right).
    \end{equation*}
    The $(i,j)$-th chain group is defined by $C^{i,j}(M,\rho) = \bigoplus_{S \in \binom{E}{i}} C^{S,j}(M,\rho)$, where $\binom{E}{i}$ is the set of all subsets of $E$ of cardinality $i$. 
    Taking a sum over $j$, we have the $i$-th chain group as $C^i(M,\rho) = \bigoplus_{j \geq 0}C^{i,j}(M,\rho)$.

    As the next step, let us define differentials. 
    For $S \subset E$ and $e \notin S$ let $d_{(M,\rho)}^{S,e}: \bigwedge^j \left( N / \rho(S) \right) \to \bigwedge^j \left( N / \rho (S \cup e) \right)$ be an algebra map induced by module homomorphism $N/\rho(S) \to N/\rho (S \cup e)$. 
    The $(i,j)$-th differential is defined by
    \begin{equation*}
        d_{(M,\rho)}^{i,j}= \bigoplus_{\substack{S \in \binom{E}{i}\\ e \notin S}}\epsilon^{S,e}d_{(M,\rho)}^{S,e}.
    \end{equation*}
    The value $\epsilon^{S,e}$ is defined by
    \begin{align*}
        \epsilon^{S,e} & = \left\{\begin{aligned}-1&&&\mbox{if the cardinality of }\{e'\in S\mid e'<e\} \mbox{ is odd,} \\ 1&&&\mbox{otherwise.}\end{aligned}\right.
    \end{align*}
    Taking a sum over $j$, we have the $i$-th differential as $d_{(M,\rho)}^i= \bigoplus_{j \geq 0}d_{(M,\rho)}^{i,j}$.

    Note that since $d_{(M,\rho)}^{S \cup e, e'} \circ d_{(M,\rho)}^{S,e} = d_{(M,\rho)}^{S \cup e',e} \circ d_{(M,\rho)}^{S,e'}$ and $\epsilon^{S,e} \epsilon^{S \cup e,e'}= - \epsilon^{S,e'}\epsilon^{S \cup e',e}$, it is clear that $d_{(M,\rho)}^{i+1,j}\circ d_{(M,\rho)}^{i,j} = 0$.
    Thus we have a chain complex $\Cc(M,\rho) = (C^\bullet(M,\rho),$ $d_{(M,\rho)}^\bullet)$.
    \begin{defi}
        We call the cohomology of the chain complex $\Cc(M,\rho)$ the \textbf{characteristic cohomology of the matroid $(M,\rho)$}, and denote it by $H^\bullet(M,\rho)$.
    \end{defi}
    By the above construction, the cohomology for $j = 0$ vanishes unless $M = U_{0,0}$ since $\bigwedge^0 L = R$ for all $R$-module $L$.
    \begin{thm}
        The graded Euler characteristic of the chain complex $\Cc(M,\rho)$ is equal to the characteristic polynomial of the matroid $M$ evaluated at $\lambda = 1 + q$.
    \end{thm}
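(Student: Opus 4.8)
The plan is to compute $\chi_q(\mathcal{C}(M_\rho))$ directly from the definition and match it term-by-term with the defining formula for $\chi(M;\lambda)$ at $\lambda = 1+q$. First I would unwind the graded Euler characteristic using the direct-sum decomposition of the chain groups over subsets:
\begin{equation*}
\chi_q(\mathcal{C}(M_\rho)) = \sum_{i=0}^n (-1)^i \, q\text{-}\rank C^{i}(M_\rho) = \sum_{S \subseteq E} (-1)^{\abs{S}} \sum_{j \geq 0} q^j \rank \bigwedge^j \bigl(N/\rho(S)\bigr).
\end{equation*}
The inner sum is the graded rank of the full exterior algebra of the free $R$-module $N/\rho(S)$. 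The key algebraic fact I would invoke is that for a free module $L$ of rank $m$, the exterior algebra satisfies $\sum_{j\geq 0} q^j \rank \bigwedge^j L = (1+q)^m$, since $\rank \bigwedge^j L = \binom{m}{j}$ and the binomial theorem applies. So I need $N/\rho(S)$ to be free of rank $\rank N - \rank_R \rho(S)$.

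The next step is to verify that $N/\rho(S)$ is indeed free of the expected rank. By the definition of a quasi-representation, $\rho(S)$ is a free submodule of $N=\rho(E)$ with $\rank_R \rho(S) = \rank_M(S)$. One must check that the quotient $N/\rho(S)$ is again free — this is where a little care is needed, and I expect it to be the main (though modest) obstacle: over a general integral domain $R$ a submodule of a free module need not be a direct summand, so freeness of the quotient is not automatic. I would argue that for the quasi-representations under consideration this holds, either because in the canonical case $\rho(S) = \langle \mathbf{e}_1, \ldots, \mathbf{e}_{\rank_M(S)}\rangle$ is spanned by part of a basis (so the quotient is literally free on the remaining basis vectors), or — if the statement is meant for arbitrary quasi-representations — one needs to add the hypothesis or observe that the saturation condition forces $\rho(S)$ to be a direct summand. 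Granting this, $N/\rho(S)$ is free of rank $r - \rank_M(S) = \corank_M(S)$.

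With freeness in hand, the computation closes immediately:
\begin{equation*}
\sum_{j \geq 0} q^j \rank \bigwedge^j \bigl(N/\rho(S)\bigr) = (1+q)^{\corank_M(S)},
\end{equation*}
so that
\begin{equation*}
\chi_q(\mathcal{C}(M_\rho)) = \sum_{S \subseteq E} (-1)^{\abs{S}} (1+q)^{\corank_M(S)} = \chi(M; 1+q),
\end{equation*}
by the definition of the characteristic polynomial with $\lambda = 1+q$. Finally I would note the sign conventions cause no trouble: the $\epsilon^{S,e}$ signs affect only the differentials, not the chain groups, so they play no role in the Euler characteristic computation, and the homological degree $i$ tracks exactly $\abs{S}$, matching the $(-1)^{\abs{S}}$ in the characteristic polynomial. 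This completes the proof.
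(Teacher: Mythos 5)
Your overall strategy is the right one (the paper in fact states this theorem without proof, and the term-by-term comparison with $\sum_{S}(-1)^{|S|}(1+q)^{\corank_M(S)}$ is clearly the intended argument), but the middle step as you wrote it contains a genuine error. You make the computation hinge on $N/\rho(S)$ being a free $R$-module, and you propose to secure this either by restricting to canonical quasi-representations or by claiming that ``the saturation condition forces $\rho(S)$ to be a direct summand.'' There is no such saturation condition in the definition of a quasi-representation: $\rho(S)$ is only required to be a free submodule of the correct rank. Indeed the paper's own Section 5 example $\rho[a,b]$ for $U_{2,2}$ over $\mathbb{Z}$, with $\rho[a,b](\{1\})=\langle(a,0)\rangle$, gives $N/\rho(S)\cong\mathbb{Z}/a\mathbb{Z}\oplus\mathbb{Z}$, which is not free (and produces torsion in the cohomology). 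So as written your argument only covers a restricted class of quasi-representations, while the theorem is asserted for all of them.

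The fix is that freeness is not needed at all, because of how $\rank_R$ is defined in the paper: $\rank_R L=\dim_\Bbbk(L\otimes_R\Bbbk)$ with $\Bbbk$ the quotient field, so torsion contributes nothing. Since exterior powers commute with base change and $\otimes_R\Bbbk$ is right exact,
\begin{equation*}
\rank_R \bigwedge^j\bigl(N/\rho(S)\bigr)
=\dim_\Bbbk \bigwedge^j\Bigl(\bigl(N\otimes_R\Bbbk\bigr)/\bigl(\rho(S)\otimes_R\Bbbk\bigr)\Bigr)
=\binom{\corank_M(S)}{j},
\end{equation*}
because $\bigl(N/\rho(S)\bigr)\otimes_R\Bbbk$ is a $\Bbbk$-vector space of dimension $r-\rank_M(S)=\corank_M(S)$. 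With this replacement your binomial-theorem computation goes through verbatim for an arbitrary quasi-representation, and the rest of your argument (the $\epsilon^{S,e}$ signs being irrelevant, $i=|S|$ matching $(-1)^{|S|}$) is fine.
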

    Note that the graded Euler characteristic of the chain complex $\Cc(M,\rho)$ is independent from the choice of quasi-representation.

    The definition of the differential relies on the ordering of elements of the matroid and a quasi-representation. 
    Though the cohomology depends on the choice of the quasi-representation, it is determined independently of the ordering of elements in the matroid. This fact is a generalization of Lemma 2.2 in \cite{DL}.
    \begin{lem}
        For any permutation $\sigma \in S_{n}$ and $(M,\rho) = (E, \rk_M, \rho)$ whose ground set is $E = \{ 1 < \dots < n \}$, let $\sigma(M,\rho) = (E, \rk_M, \rho)$ be a matroid with permuted order $\sigma(1)< \dots< \sigma(n)$. 
        Then, we have
        \begin{equation*}
            H^\bullet(M,\rho) \simeq H^\bullet(\sigma(M,\rho)).
        \end{equation*}
    \end{lem}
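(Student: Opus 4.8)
The plan is to reduce to the case of a transposition of two adjacent elements, since the symmetric group $S_n$ is generated by such transpositions, and the isomorphisms compose. So it suffices to show that for $\sigma = (k, k+1)$ the chain complexes $\mathcal{C}(M_\rho)$ and $\mathcal{C}(\sigma(M_\rho))$ are isomorphic as chain complexes (not merely quasi-isomorphic), which of course gives the isomorphism on cohomology. The underlying chain groups $C^{i,j}(M_\rho) = \bigoplus_{S \in \binom{E}{i}} \bigwedge^j N/\rho(S)$ are literally the same $R$-modules for both orderings, since they depend only on the set $E$ and the quasi-representation $\rho$, not on the order. The only thing that changes is the sign $\epsilon^{S,e}$ attached to each face map $d_{M_\rho}^{S,e}$, because reordering changes the count of $\{e' \in S \mid e' < e\}$.

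**Next I would** define the chain isomorphism $\Phi\colon \mathcal{C}(M_\rho) \to \mathcal{C}(\sigma(M_\rho))$ as a signed identity: on the summand indexed by $S$ it is $(-1)^{\gamma(S)} \cdot \mathrm{id}$ where $\gamma(S)$ is a suitably chosen integer depending on $S$ (and on $k$). The natural choice is $\gamma(S) = 1$ if both $k \in S$ and $k+1 \in S$, and $\gamma(S) = 0$ otherwise; equivalently $\gamma(S) = \lfloor \lvert S \cap \{k, k+1\}\rvert / 2 \rfloor$. The content of the lemma is then the commutativity of the square
\begin{equation*}
    \epsilon^{S,e}_{\mathrm{old}} \cdot (-1)^{\gamma(S \cup e)} = (-1)^{\gamma(S)} \cdot \epsilon^{S,e}_{\mathrm{new}}
\end{equation*}
for every $S$ and every $e \notin S$, where $\epsilon_{\mathrm{old}}$ uses the order $1 < \dots < n$ and $\epsilon_{\mathrm{new}}$ uses the order with $k, k+1$ swapped. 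This is a finite case check: the two signs $\epsilon^{S,e}_{\mathrm{old}}$ and $\epsilon^{S,e}_{\mathrm{new}}$ differ exactly when swapping $k$ and $k+1$ changes the parity of $\{e' \in S \mid e' < e\}$, which happens precisely when $e \in \{k, k+1\}$ and exactly one of $k, k+1$ lies in $S$ with the appropriate position. Running through the cases $e \notin \{k,k+1\}$ (signs unchanged, and $\gamma(S) = \gamma(S\cup e)$ since $e$ does not affect the intersection with $\{k,k+1\}$) and $e \in \{k, k+1\}$ (here $\gamma$ may jump by $1$, and this jump is designed to absorb the sign discrepancy) verifies the identity.

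**The main obstacle** I expect is purely bookkeeping: getting the combinatorial definition of $\gamma(S)$ exactly right so that the compensating signs match in all cases, particularly keeping straight whether "$k < k+1$" or "$k+1 < k$" in the permuted order and how that interacts with an element $e$ sitting below, between, or above the pair. Once the sign identity is checked, it is immediate that $\Phi$ commutes with the differentials: on each summand, $d^{S,e}_{M_\rho}$ and $d^{S,e}_{\sigma(M_\rho)}$ are the same underlying module map $\bigwedge^j N/\rho(S) \to \bigwedge^j N/\rho(S\cup e)$ (both induced by the canonical quotient, which does not see the order), so the only discrepancy is the scalar, and that is exactly what the identity above cancels. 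Since each component of $\Phi$ is $\pm\mathrm{id}$, $\Phi$ is an isomorphism of chain complexes, hence induces $H^\bullet(M_\rho) \simeq H^\bullet(\sigma(M_\rho))$. Finally, composing such isomorphisms over a factorization of an arbitrary $\sigma \in S_n$ into adjacent transpositions gives the general statement. This argument is the matroid-with-quasi-representation analogue of Lemma 2.2 in \cite{DL}, and no new ideas beyond careful sign tracking are needed.
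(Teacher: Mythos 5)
Your proof is correct: the sign identity $\epsilon^{S,e}_{\mathrm{old}}(-1)^{\gamma(S\cup e)}=(-1)^{\gamma(S)}\epsilon^{S,e}_{\mathrm{new}}$ does hold in all cases (the only sign flips occur when $e\in\{k,k+1\}$ and the other element of the pair lies in $S$, exactly where $\gamma$ jumps), so the signed identity is a chain isomorphism and the reduction to adjacent transpositions finishes the argument. This is essentially the same mechanism the paper invokes --- it gives no details and simply defers to Lemma 2.2 of \cite{DL}, whose proof is the analogous sign-twisted identification of the complexes --- so your write-up just makes explicit what the paper leaves to the reference.
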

    \begin{proof}
        The proof is similar to the proof of Lemma 2.2 in \cite{DL} by replacing the arrangement of hyperplanes $\Hc= \{ V; H_1, \dots, H_{n}\}$ and $\mathcal{H_\sigma}= \{ V; H_{\sigma(1)}, \dots, H_{\sigma(n)} \}$ by matroids $(M,\rho) = (E, \rk_M, \rho)$ with $E = \{ 1 < \dots < n \}$ and $\sigma(M,\rho) = (E, \rk_M, \rho)$ with $\sigma(1) < \dots < \sigma(n)$, respectively.
    \end{proof}

    \begin{rem}
        As noted above, the cohomology $H^\bullet(M,\rho)$ depends on the choice of the quasi-representation. 
        However, if the representations $\rho,\rho'$ of $M$ are equivalent, we can see that $H^\bullet(M,\rho) \simeq H^\bullet(M,\rho')$. 
        In this remark, let us consider an example in which $H^\bullet(M,\rho)$ depends on the quasi-representations. 
        Let $E = \{ 1, 2 \}$ be a ground set and consider two quasi-representations $\rho_1$ and $\rho_2$ of the uniform matroid $U_{2,2}$ defined by $\rho_1(1) = \left< \ebf_1 \right>, \rho_1(2) = \left< \ebf_2 \right>$ and $\rho_2(1) = \rho_2(2) = \left< \ebf_1\right>$, where $\ebf_1 = (1,0), \ebf_2 = (0,1)$ are generators of $N = R^2$. 
        Let $(M,\rho_i)$, $i = 1, 2$ be matroids on the same ground set $E$ with two quasi-representations $\rho_1, \rho_2$.
        Let $\rk_M(S) = \abs{S}$. 
        Then, we have chain groups as
        \begin{align*}
            C^{\{ 1 \}}(M,\rho_1) = C^{\{ 1 \}}(M,\rho_2) = C^{\{ 2 \}}(M,\rho_2) & = \bigwedge^\bullet \left( N/\left< e_1 \right> \right)  \\
            C^{\{ 2 \}}(M,\rho_1)                                                     & = \bigwedge^\bullet \left(N/\left< e_2 \right> \right).
        \end{align*}
        When $j = 0$, obviously $C^{*,0}(M,\rho) = R$, so we have $H^{i,0}(M,\rho_1) = H^{i,0}(M,\rho_2)$. 
        Similarly, we have $H^{i,2}(M,\rho_1) = H^{i,2}(M,\rho_2)$. 
        The difference appears when $j = 1$. 
        The chain group $C^{i,1}(M,\rho_1)$ would be as follows.

        \begin{equation*}
            \xymatrix{ & R^2/\left< (1,0) \right> \ar@{->>}[dr]^{d_{(U_{2,2})_{\rho_1}}^{\{ 1 \}, \{ 2 \}}} & \\ 
            R^2 \ar@{->>}[ur]^{d_{(U_{2,2})_{\rho_1}}^{\emptyset, \{ 1 \}}} \ar@{->>}[dr]_{d_{(U_{2,2})_{\rho_1}}^{\emptyset, \{ 2 \}}} & & 0 \\ 
            & R^2/\left< (0,1) \right> \ar@{->>}[ur]_{d_{(U_{2,2})_{\rho_1}}^{\{ 2 \}, \{ 1 \}}} & }
        \end{equation*}

        In this case, we have $H^{0,1}(M,\rho_1) = H^{1,1}(M,\rho_1) = H^{2,1}(M,\rho_1) = 0$. 
        On the other hand, the chain group $C^{i,1}(M,\rho_2)$ would be as follows.
        \begin{equation*}
            \xymatrix{ & R^2/\left< (1,0) \right> \ar@{->>}[dr]^{d_{(U_{2,2})_{\rho_2}}^{\{ 1 \}, \{ 2 \}}} & \\ 
            R^2 \ar@{->>}[ur]^{d_{(U_{2,2})_{\rho_2}}^{\emptyset, \{ 1 \}}} \ar@{->>}[dr]_{d_{(U_{2,2})_{\rho_2}}^{\emptyset, \{ 2 \}}} & & 0 \\ 
            & R^2/\left<(1,0)\right> \ar@{->>}[ur]_{d_{(U_{2,2})_{\rho_2}}^{\{ 2 \}, \{ 1 \}}} & }
        \end{equation*}
        In this case, we have $H^{0,1}(M,\rho_2) = \left< (1,0) \right> \simeq R$, $H^{1,1}(M,\rho_2) \simeq R$, $H^{2,1}(M,\rho_2) = 0$.
    \end{rem}

    \section{Basic properties of the characteristic cohomology for matroids}\label{sec:properties} 
    In this section, we categorify the properties of the characteristic polynomial of matroids in Proposition \ref{prop:CharPoly}. 
    Throughout this section, we assume $M \neq U_{0,0}$.

    \subsection{Deletion-contraction property and exact sequences}\label{sec:exactseq} 
    We assume that $N$ is a finite generated $R$-module $R^r \oplus R/(c_{r+1}) \oplus \dots \oplus R/(c_{r+t})$ generated by $\bar{\ebf}_1, \dots,  \bar{\ebf}_r, \bar{\ebf}_{r+1}, \dots, \bar{\ebf}_{r+t}$ where $\bar{\ebf}_1, \dots, \bar{\ebf}_r$ generate the free part of $N$. 
    For $S \subset E$, let $\ebf_{S, \iota} \coloneqq \bar{\ebf}_{\iota} + \rho(S)$. 
    Then, we can write the generators of $C^{S,j}(M, \rho)$ as the exterior, i.e., for $J = \{ \iota_1 < \cdots < \iota_{j} \} \subset [r + t]$,
    \begin{equation*}
        \ebf_{S, J} = \ebf_{S, \iota_1} \wedge \dots \wedge \ebf_{S, \iota_{j}}.
    \end{equation*}

    Let $e \in E$ be not a coloop. 
    Then, for $\ebf_{S, J} \in C^{i, j}(M/e, \rho_{/e})$ define $\alpha^{i, j}(\ebf_{S, J})$ by $(-1)^{\abs{S}} \epsilon^{S, e} \ebf_{S \cup e, J}$. 
    This gives a homomorphism $\alpha^{i, j}: C^{i, j}(M/e, \rho_{/e}) \to C^{i + 1, j}(M, \rho)$. 
    For $\ebf_{S, J} \in C^{i, j}(M, \rho)$ define $\beta^{i,j}(\ebf_{S,J})$ by $0$ if $e \in S$, and $\ebf_{S,J}$ if $e \notin S$. 
    This gives a homomorphism $\beta^{i, j}: C^{i, j}(M, \rho) \to C^{i, j}(M \setminus e, \rho_{\setminus e})$. 
    We abbreviate $\alpha^{i, j}$ and $\beta^{i, j}$ by $\alpha$ and $\beta$ for simplicity.

    Using the above homomorphisms we have the following proposition which essentially proceeds in the same way as in \cite{DL, HGR, JHR}.
    \begin{prop}\label{prop:shortex} 
    Suppose that $e$ is not a coloop. 
    Then, the maps $\alpha$ and $\beta$ are chain maps such that $0 \to C^{i-1,j}(M/e, \rho_{/e}) \xrightarrow{\alpha} C^{i,j}(M, \rho) \xrightarrow{\beta}C^{i, j}(M \setminus e, \rho_{\setminus e}) \to 0$ is a short exact sequence.
    \end{prop}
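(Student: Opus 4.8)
The plan is to verify three things: that $\alpha$ and $\beta$ are chain maps, that the sequence is exact at each of the three positions, and that the whole thing assembles compatibly with the differentials. I would organize the exactness check by fixing $i$ and $j$ and analyzing the direct-sum decomposition over subsets of $E$ according to whether they contain $e$.

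First I would treat exactness. The key observation is that $\binom{E}{i}$ splits as $\{S \in \binom{E}{i} : e \notin S\} \sqcup \{S \in \binom{E}{i} : e \in S\}$, and correspondingly
\[
C^{i,j}(M_\rho) = \left( \bigoplus_{\substack{S \in \binom{E}{i}\\ e \notin S}} C^{S,j}(M_\rho) \right) \oplus \left( \bigoplus_{\substack{S \in \binom{E}{i}\\ e \in S}} C^{S,j}(M_\rho) \right).
\]
The map $\beta$ kills the second summand and is the identity on the first, with the first summand being exactly $C^{i,j}(M_\rho \setminus e)$ since $\rho_{\setminus e}(S) = \rho(S)$ for $S$ not containing $e$; hence $\beta$ is surjective with kernel equal to the second summand. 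For $\alpha$, I would note that the map $S \mapsto S \cup e$ is a bijection from $\binom{E \setminus e}{i-1}$ onto $\{T \in \binom{E}{i} : e \in T\}$, and that $C^{S,j}(M_\rho/e) = \bigwedge^j \big( (N/\rho(e)) / (\rho(S \cup e)/\rho(e)) \big) \cong \bigwedge^j N/\rho(S \cup e) = C^{S \cup e, j}(M_\rho)$, canonically. Thus $\alpha$ is, up to the nonzero sign $(-1)^{|S|}\epsilon^{S,e}$ on each component, an isomorphism onto the second summand, so it is injective with image exactly $\ker \beta$. This gives exactness at all three spots; the fact that $\epsilon^{S,e} \in \{\pm 1\}$ is a unit in $R$ is what makes the sign harmless.

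Next I would check that $\alpha$ and $\beta$ are chain maps, i.e. $d_{M_\rho} \circ \alpha = \alpha \circ d_{M_\rho/e}$ and $d_{M_\rho \setminus e} \circ \beta = \beta \circ d_{M_\rho}$. For $\beta$: applying $d_{M_\rho}$ to a generator $\mathbf{e}_{S,J}$ with $e \notin S$ produces a sum of terms $\epsilon^{S,e'} \mathbf{e}_{S \cup e', J'}$ over $e' \notin S$; the terms with $e' = e$ are killed by $\beta$, and the remaining terms match $\beta$ composed the other way since the sign $\epsilon^{S,e'}$ and the projection $N/\rho(S) \to N/\rho(S\cup e')$ are insensitive to whether we work in $M$ or $M \setminus e$. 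For $\alpha$, the computation is the more delicate one: applying $d_{M_\rho/e}$ then $\alpha$ versus $\alpha$ then $d_{M_\rho}$ both land in $\bigoplus C^{S \cup e \cup e', j}(M_\rho)$, and one must check the signs $(-1)^{|S|}\epsilon^{S,e}\epsilon^{S\cup e, e'}$ against $(-1)^{|S|}\epsilon^{S,e'}\epsilon^{S\cup e', e}$; the sign identity $\epsilon^{S,e}\epsilon^{S\cup e, e'} = -\epsilon^{S,e'}\epsilon^{S\cup e', e}$ quoted in the text combines with the extra $(-1)$ built into $\alpha$'s definition to make things agree. I would also note the module maps commute because $(N/\rho(e))/(\rho(S\cup e)/\rho(e)) \to (N/\rho(e))/(\rho(S\cup e\cup e')/\rho(e))$ is identified with $N/\rho(S\cup e) \to N/\rho(S\cup e\cup e')$.

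The main obstacle I anticipate is purely the sign bookkeeping in the $\alpha$ chain-map verification — keeping straight the interaction of the Koszul-type signs $\epsilon^{S,e}$ (which depend on the order position of $e$ relative to $S$) with the extra $(-1)^{|S|}$ and with the reordering $S \cup e \cup e' = S \cup e' \cup e$. Everything else (the module-theoretic isomorphisms, the surjectivity/injectivity) is formal once one uses the third axiom of a quasi-representation to see $\rho(S \cup e)/\rho(e) \subset N/\rho(e)$ is a free submodule of the correct rank so that the contraction quotients behave as expected. I would present the sign computation once, carefully, and refer to it; the rest can be stated as immediate from the decompositions above.
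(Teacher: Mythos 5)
Your proposal is correct and follows essentially the same route as the paper: the chain-map verifications are the same sign computations (with the identity $\epsilon^{S,e}\epsilon^{S\cup e,e'}=-\epsilon^{S,e'}\epsilon^{S\cup e',e}$ absorbing the extra $(-1)^{|S|}$ in $\alpha$), and exactness rests on the same decomposition of $C^{i,j}(M_\rho)$ according to whether $e\in S$, with $\beta$ projecting onto the $e\notin S$ part and $\alpha$ hitting the $e\in S$ part. If anything, your summand-wise formulation --- identifying $\alpha$ on each summand with $\pm$ the canonical isomorphism $\bigwedge^j\bigl((N/\rho(e))/(\rho(S\cup e)/\rho(e))\bigr)\simeq\bigwedge^j N/\rho(S\cup e)$, and noting that $\rho(E\setminus e)=\rho(E)=N$ by the third quasi-representation axiom because $e$ is not a coloop --- is a slightly more robust way to get injectivity of $\alpha$ and $\ker\beta=\im\alpha$ than the paper's coefficient-cancellation argument with the generators $\mathbf{e}_{S\cup e,J}$, which need not be linearly independent within a summand.
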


    \begin{proof}
        To begin with, let us show that $\alpha :C^{i-1,j}(M/e, \rho_{/e}) \to C^{i,j}(M, \rho)$ is a chain map.

        We have
        \begin{align*}
            \left( d_{(M, \rho)}\circ \alpha \right) (\ebf_{S,J}) & = d_{(M, \rho)}\left( (-1)^{\abs{S}}\epsilon^{S,e} \ebf_{S \cup e, J}\right)                                        \\
                                                                        & = \sum_{f \in E \setminus (S \cup e)}(-1)^{\abs{S}} \epsilon^{S, e}\epsilon^{S \cup e, f} \ebf_{S \cup \{ e, f \}, J}.
        \end{align*}
        On the other hand, we have
        \begin{align*}
            \left( \alpha \circ d_{(M/e, \rho_{/e})} \right) (\ebf_{S,J}) & = \alpha \left( \sum_{f \in E \setminus (S \cup e)} \epsilon^{S,f} \ebf_{S \cup f, J} \right)                              \\
                                                                              & = \sum_{f \in E \setminus (S \cup e)}(-1)^{\abs{S \cup f}} \epsilon^{S, f} \epsilon^{S \cup f, e} \ebf_{S \cup \{ e, f \}, J}.
        \end{align*}
        Since we have
        \begin{align*}
            \sum_{f \in E \setminus (S \cup e)} (-1)^{\abs{S \cup f}} \epsilon^{S, f} \epsilon^{S \cup f, e} \ebf_{S \cup \{ e, f \}, J} & = \sum_{f \in E \setminus (S \cup e)}(-1)^{\abs{S \cup f} + 1} \epsilon^{S, e} \epsilon^{S \cup e, f} \ebf_{S \cup \{ e, f \}, J} \\
                                                                                                                                         & = \sum_{f \in E \setminus (S \cup e)}(-1)^{\abs{S}} \epsilon^{S, e} \epsilon^{S \cup e, f} \ebf_{S \cup \{ e, f \}, J}
        \end{align*}
        the diagram
        \begin{center}
            \begin{tikzcd}
                C^{i-1,j}(M/e, \rho_{/e}) \ar[r, "\alpha"] \arrow[d, "d_{(M/e, \rho_{/e})}"'] & C^{i,j}(M, \rho) \ar[d, "d_{(M, \rho)}"] \\ C^{i,j}(M/e, \rho_{/e}) \ar[r, "\alpha"] & C^{i+1,j}(M, \rho)
            \end{tikzcd}
        \end{center}
        commutes. 
        Next, let us show that $\beta: C^{i,j}(M, \rho) \to C^{i,j}(M \setminus e, \rho_{\setminus e})$ is a chain map. \\ 
        When $e \in S$, the commutativity is obvious, so we can assume $e \notin S$. 
        In this case, we have
        \begin{align*}
            \left( d_{(M \setminus e, \rho_{\setminus e})} \circ \beta \right) (\ebf_{S,J}) & = d_{(M \setminus e, \rho_{\setminus e})}(\ebf_{S, J})                \\
                                                                                                 & = \sum_{f \in E \setminus (S \cup e)} \epsilon^{S, f} \ebf_{S \cup f, J}.
        \end{align*}
        On the other hand, we have
        \begin{align*}
            \left( \beta \circ d_{(M, \rho)} \right) (\ebf_{S,J}) & = \beta \left( \sum_{f \in E \setminus S} \epsilon^{S, f} \ebf_{S \cup f, J} \right)                                                                                                                                     \\
                                                                       & = \beta \left( \sum_{f \in E \setminus (S \cup e)}\epsilon^{S, f} \ebf_{S \cup f, J} \right) + \beta(\epsilon^{S, e} \ebf_{S \cup e, J}) = \sum_{f \in E \setminus (S \cup e)} \epsilon^{S, f} \ebf_{S \cup f, J}.
        \end{align*}
        Thus, the diagram
        \begin{center}
            \begin{tikzcd}
                C^{i,j}(M, \rho) \ar[r, "\beta"] \arrow[d, "d_{(M, \rho)}"'] & C^{i,j}(M \setminus e, \rho_{\setminus e}) \ar[d, "d_{(M \setminus e, \rho_{\setminus e})}"] \\ C^{i+1,j}(M, \rho) \ar[r, "\beta"] & C^{i+1,j}(M \setminus e, \rho_{\setminus e})
            \end{tikzcd}
        \end{center}
        commutes.
    
Let us next show the exactness of the sequence
\begin{equation*}
    0 \to C^{i-1,j}(M/e,\rho_{/e}) \xrightarrow{\alpha} C^{i,j}(M,\rho) \xrightarrow{\beta} C^{i,j}(M\setminus e,\rho_{\setminus e}) \to 0.
\end{equation*}

First, note that
\begin{equation*}
    C^{i,j}(M,\rho) = \bigoplus_{\substack{S \in \binom{E}{i} \\ e \notin S}} C^{S,j}(M,\rho) \oplus \bigoplus_{\substack{S \in \binom{E}{i} \\ e \in S}} C^{S,j}(M,\rho).
\end{equation*}
By definition, the map $\beta$ is the projection onto the first summand.
Hence $\beta$ is surjective and $\kerne \beta = \bigoplus_{S \in \binom{E}{i},\ e\in S} C^{S,j}(M,\rho)$.

On the other hand, for each subset $S \subset E \setminus e$ with $\abs{S} = i - 1$, we have
\begin{equation*}
C^{S,j}(M/e,\rho_{/e}) = \bigwedge^j \frac{\rho(E)/\rho(e)} {\rho(S\cup e)/\rho(e)} \cong \bigwedge^j \frac{\rho(E)} {\rho(S \cup e)} = C^{S \cup e,j}(M,\rho).
\end{equation*}

Under this identification, the map $\alpha$ restricts to an isomorphism
\begin{equation*}
C^{S,j}(M/e,\rho_{/e}) \longrightarrow C^{S \cup e,j}(M,\rho)
\end{equation*}
up to the sign $(-1)^{\abs{S}} \epsilon^{S,e}$.
Therefore $\alpha$ is injective and $\im \alpha = \bigoplus_{S\in\binom{E}{i},\ e\in S}C^{S,j}(M,\rho)$.

Consequently, $\im \alpha = \kerne \beta$, hence the sequence is exact.
\end{proof}

    By the Zig-Zag lemma we have a long exact sequence of the characteristic cohomologies of matroids which categorifies the second property in Proposition \ref{prop:CharPoly}.
    \begin{thm}\label{thm:LongExactSq} 
    Consider a matroid $(M, \rho) = (E, \rk_{M}, \rho)$ and fix an element $e \in E$ which is not a coloop. 
    Then, for each $j$, we have a long exact sequence
        \begin{align*}
            0 \to H^{0,j}(M, \rho) & \xrightarrow{\beta^\ast}H^{0,j}(M\setminus e, \rho_{\setminus e}) \xrightarrow{\gamma^\ast}H^{0, j}(M/e, \rho_{/e}) \xrightarrow{\alpha^\ast}H^{1,j}(M, \rho)      \\
                                   & \xrightarrow{\beta^\ast}H^{1,j}(M \setminus e, \rho_{\setminus e}) \xrightarrow{\gamma^\ast}H^{1, j}(M/e, \rho_{/e}) \xrightarrow{\alpha^\ast}\cdots               \\
                                   & \xrightarrow{\beta^\ast}H^{i,j}(M \setminus e, \rho_{\setminus e}) \xrightarrow{\gamma^\ast}H^{i, j}(M/e, \rho_{/e}) \xrightarrow{\alpha^\ast}H^{i + 1,j}(M, \rho) \\
                                   & \to \cdots
        \end{align*}
        By summing over $j$, we have a long exact sequence
        \begin{align*}
            0 \to H^{0}(M, \rho) & \xrightarrow{\beta^\ast}H^{0}(M \setminus e, \rho_{\setminus e}) \xrightarrow{\gamma^\ast}H^{0}(M/e, \rho_{/e}) \xrightarrow{\alpha^\ast}H^1(M, \rho)     \\
                                 & \xrightarrow{\beta^\ast}H^1(M \setminus e, \rho_{\setminus e}) \xrightarrow{\gamma^\ast}H^1(M/e, \rho_{/e}) \xrightarrow{\alpha^\ast} \cdots             \\
                                 & \xrightarrow{\beta^\ast}H^i(M \setminus e, \rho_{\setminus e}) \xrightarrow{\gamma^\ast}H^i(M/e, \rho_{/e}) \xrightarrow{\alpha^\ast}H^{i + 1}(M, \rho) \\
                                 & \to \cdots.
        \end{align*}
    \end{thm}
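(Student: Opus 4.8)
The plan is to read off the long exact sequence as the standard cohomology long exact sequence (the Zig-Zag lemma) attached to the short exact sequence of cochain complexes supplied by Proposition \ref{prop:shortex}. Fix the internal degree $j$ and let $A^\bullet$ denote the cochain complex obtained from $C^{\bullet,j}(M_\rho/e)$ by shifting up by one in cohomological degree; that is, $A^i = C^{i-1,j}(M_\rho/e)$ with differential $d_A^i = d_{M_\rho/e}^{i-1,j}\colon C^{i-1,j}(M_\rho/e)\to C^{i,j}(M_\rho/e)$. The relation $d_{M_\rho/e}^{i,j}\circ d_{M_\rho/e}^{i-1,j}=0$ established in Section \ref{sec:construction} shows $A^\bullet$ is indeed a complex, and Proposition \ref{prop:shortex} says precisely that
\[
0 \to A^i \xrightarrow{\ \alpha\ } C^{i,j}(M_\rho) \xrightarrow{\ \beta\ } C^{i,j}(M_\rho\setminus e) \to 0
\]
is exact for every $i$ with $\alpha$ and $\beta$ commuting with the differentials, i.e. it is a short exact sequence of cochain complexes.

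Applying the Zig-Zag lemma then produces connecting homomorphisms, which we name $\gamma^*$, fitting into a long exact sequence
\[
\cdots \to H^{i}(A^\bullet) \xrightarrow{\ \alpha^*\ } H^{i,j}(M_\rho) \xrightarrow{\ \beta^*\ } H^{i,j}(M_\rho\setminus e) \xrightarrow{\ \gamma^*\ } H^{i+1}(A^\bullet) \to \cdots .
\]
The only bookkeeping is the identification $H^{i}(A^\bullet) = H^{i-1,j}(M_\rho/e)$, which is immediate from the definition of $A^\bullet$ as a one-step shift (and insensitive to sign conventions for the shifted differential, since passing from $d$ to $-d$ does not change cohomology). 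Substituting this turns the sequence into $\cdots \to H^{i,j}(M_\rho\setminus e) \xrightarrow{\gamma^*} H^{i,j}(M_\rho/e) \xrightarrow{\alpha^*} H^{i+1,j}(M_\rho)\to\cdots$, which is the asserted sequence; here $\alpha^*$ and $\beta^*$ are the maps induced by the chain maps $\alpha$, $\beta$ of Proposition \ref{prop:shortex}.

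It remains to pin down the left end. Since there are no subsets of negative cardinality, $C^{i,j}(M_\rho) = C^{i,j}(M_\rho\setminus e) = 0$ and $A^i = C^{i-1,j}(M_\rho/e)=0$ for $i<0$, and moreover $A^0 = C^{-1,j}(M_\rho/e)=0$; hence the cohomology of all three complexes vanishes in negative degrees and $H^0(A^\bullet)=0$. Feeding this into the long exact sequence, it begins
\[
0 \to H^{0,j}(M_\rho) \xrightarrow{\ \beta^*\ } H^{0,j}(M_\rho\setminus e)\xrightarrow{\ \gamma^*\ } H^{0,j}(M_\rho/e)\xrightarrow{\ \alpha^*\ } H^{1,j}(M_\rho)\to\cdots,
\]
exactly as in the statement.

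Finally, the ungraded long exact sequence is obtained by summing over $j$: all differentials $d_{M_\rho}^{i,j}$ preserve the internal degree $j$, so $H^i(M_\rho)=\bigoplus_j H^{i,j}(M_\rho)$ and likewise for $M_\rho\setminus e$ and $M_\rho/e$, and a direct sum of exact sequences is exact. I do not anticipate any real obstacle: the entire content sits in Proposition \ref{prop:shortex} — in particular in the hypothesis that $e$ is not a coloop, which is what makes $\alpha$ well defined and the sequence short exact — and the present theorem is a formal consequence via homological algebra, the only point needing a little care being the degree shift that sends $H^i(A^\bullet)$ to $H^{i-1,j}(M_\rho/e)$ and the corresponding truncation of the sequence at $H^{0,j}(M_\rho)$.
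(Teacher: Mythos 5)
Your proposal is correct and follows essentially the same route as the paper: the paper likewise obtains the theorem by applying the Zig-Zag lemma to the short exact sequence of chain complexes from Proposition \ref{prop:shortex} (where the non-coloop hypothesis enters), and then sums over $j$. Your added bookkeeping on the degree shift and the vanishing of $H^0$ of the shifted complex is consistent with, and merely makes explicit, what the paper leaves implicit.
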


    If $e$ is a coloop, then we need to slightly modify the construction.
    Suppose $\rho(E) = \rho(E \setminus e) \oplus \rho(e)$ and $\rho(e)$ is a free module. 
    Then we retake generators $\ebf_1, \dots, \ebf_r,$ $\ebf_{r+1}, \dots, \ebf_{r+t}$ of $N$ again such that $\ebf_1, \dots, \ebf_{r-1}, \ebf_{r+1}, \dots, \ebf_{r+t}$ span $\rho(E \setminus e)$ and $\rho(e) = \left< \ebf_r\right>$. 
    Since $\rho(E)= \rho(E\setminus e)\oplus\rho(e)$, we get $\frac{\rho(E)}{\rho(S)}= \frac{\rho(E\setminus e)}{\rho(S)} \oplus\rho(e)$ for $S\subset E\setminus e$. 
    Thus we obtain $C^{S,j}(M, \rho)\cong C^{S,j}(M \setminus e, \rho_{\setminus e})\oplus C^{S,j-1}(M \setminus e, \rho_{\setminus e})\otimes \left<\ebf_r\right>\cong C^{S,j}(M \setminus e, \rho_{\setminus e})\oplus C^{S,j-1}(M \setminus e, \rho_{\setminus e})$ for $j>0$. 
    Modify $\beta$ as follows:
    \begin{align*}
        \beta: & C^{i,j}(M, \rho) \to C^{i,j}(M \setminus e, \rho_{\setminus e})\oplus C^{i,j-1}(M \setminus e, \rho_{\setminus e}) \text{ via}                                                                                                                                                                                                   \\
               & \beta(\ebf_{S,J})= \left\{ \begin{aligned}&0&&\mbox{ if }e\in S,\\&\ebf_{S,J}\in C^{i,j}(M \setminus e, \rho_{\setminus e})&&\mbox{ if }e \notin S \mbox{ and }r \notin J,\\&\ebf_{S,J\setminus r}\in C^{i,j-1}(M \setminus e, \rho_{\setminus e})&&\mbox{ if }e\notin S\mbox{ and }r\in J.\end{aligned}\right.
    \end{align*}
    Then, we obtain a short exact sequence $0 \to C^{i-1,j}(M/e, \rho_{/e}) \xrightarrow{\alpha}C^{i,j}(M, \rho) \xrightarrow{\beta}C^{i,j}(M \setminus e, \rho_{\setminus e})\oplus C^{i,j-1}(M \setminus e, \rho_{\setminus e}) \to 0$. 
    By the Zig-Zag lemma we have a long exact sequence of the characteristic cohomologies of matroids which categorifies the third property in Proposition \ref{prop:CharPoly}.
    \begin{thm}\label{thm:coloop} 
    For a matroid $(M, \rho) = (E, \rk_{M},\rho)$ and a coloop $e \in E$, if $\rho(E) = \rho(E\setminus e) \oplus \rho(e)$ with $\rho(e)$ a free module, then for each $j>0$, we have a long exact sequence
        \begin{align*}
             & 0 \to H^{0,j}(M, \rho) \xrightarrow{\beta^\ast}H^{0,j}(M \setminus e, \rho_{\setminus e})\oplus H^{0,j-1}(M \setminus e, \rho_{\setminus e}) \xrightarrow{\gamma^\ast}H^{0, j}(M/e, \rho_{/e})                 \\
             & \xrightarrow{\alpha^\ast}H^{1,j}(M, \rho) \xrightarrow{\beta^\ast}H^{1,j}(M \setminus e, \rho_{\setminus e})\oplus H^{1,j-1}(M \setminus e, \rho_{\setminus e}) \xrightarrow{\gamma^\ast}H^{1, j}(M/e, \rho_{/e}) \\
             & \xrightarrow{\alpha^\ast}\cdots \xrightarrow{\beta^\ast}H^{i,j}(M \setminus e, \rho_{\setminus e})\oplus H^{i,j-1}(M \setminus e, \rho_{\setminus e}) \xrightarrow{\gamma^\ast}H^{i, j}(M/e, \rho_{/e})           \\
             & \xrightarrow{\alpha^\ast}H^{i + 1,j}(M, \rho) \to \cdots .
        \end{align*}
    \end{thm}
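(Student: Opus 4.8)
The plan is to derive the long exact sequence from the short exact sequence of cochain complexes recorded immediately before the statement,
\[
0 \longrightarrow C^{\bullet-1,j}(M_\rho/e) \overset{\alpha}{\longrightarrow} C^{\bullet,j}(M_\rho) \overset{\beta}{\longrightarrow} C^{\bullet,j}(M_\rho\setminus e)\oplus C^{\bullet,j-1}(M_\rho\setminus e) \longrightarrow 0
\]
(with $\beta$ the modified map, and $\bullet$ the homological index), by the Zig--Zag lemma, and then to identify the cohomology of each term. Three things have to be checked along the way: the direct-sum decomposition $C^{S,j}(M_\rho)\cong C^{S,j}(M_\rho\setminus e)\oplus C^{S,j-1}(M_\rho\setminus e)$ together with its naturality in $S$; that $\alpha$ and $\beta$ are cochain maps for which the sequence is exact in every bidegree; and the computation of the cohomology of the three complexes appearing in it.

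First I would pin down the decomposition. Using the hypothesis $\rho(E\setminus e)=\bigl(\rho(E\setminus e)\otimes_R\Bbbk\bigr)\cap N$, choose a basis $\mathbf{e}_1,\dots,\mathbf{e}_r$ of $N$ with $\rho(E\setminus e)=\langle\mathbf{e}_1,\dots,\mathbf{e}_{r-1}\rangle$ and $N=\rho(E\setminus e)\oplus\langle\mathbf{e}_r\rangle$, as in the construction preceding the theorem; since $e$ is a coloop this is possible and $\rho(E)=N$. For every $S\subset E\setminus e$ one has $\rho(S)\subset\rho(E\setminus e)$, hence $N/\rho(S)=\bigl(\rho(E\setminus e)/\rho(S)\bigr)\oplus\langle\mathbf{e}_r\rangle$; taking $\bigwedge^j$ and separating the wedges that do and do not contain $\mathbf{e}_r$ gives, for $j>0$, the claimed splitting $C^{S,j}(M_\rho)\cong C^{S,j}(M_\rho\setminus e)\oplus C^{S,j-1}(M_\rho\setminus e)$. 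This splitting is natural in $S$: for $S\subset S'\subset E\setminus e$ the projection $N/\rho(S)\to N/\rho(S')$ fixes the class of $\mathbf{e}_r$ and sends $\rho(E\setminus e)/\rho(S)$ into $\rho(E\setminus e)/\rho(S')$, so it respects the two summands. Consequently the third term of the sequence, with the differential transported along $\beta$, really is the direct sum of the complexes $C^{\bullet,j}(M_\rho\setminus e)$ and $C^{\bullet,j-1}(M_\rho\setminus e)$: the only part of $d_{M_\rho}$ that could couple the two summands is the component along $e$, and that component is annihilated by $\beta$.

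Next I would check the remaining items. The verification that $\alpha$ is a cochain map is identical to the one in the proof of Proposition~\ref{prop:shortex}, since $\alpha$ is unchanged; for the modified $\beta$ one checks $\beta\circ d_{M_\rho}=(d_{M_\rho\setminus e}\oplus d_{M_\rho\setminus e})\circ\beta$ on a generator $\mathbf{e}_{S,J}$ by cases, the point being that the $f=e$ summand of $d_{M_\rho}\mathbf{e}_{S,J}$ is indexed by a set containing $e$ and is killed by $\beta$, while the other summands are transported exactly as for $M_\rho\setminus e$ (after deleting $r$ from $J$ when $r\in J$). Exactness in each bidegree then proceeds as in Proposition~\ref{prop:shortex}: $\alpha$ is injective because the $\mathbf{e}_{S\cup e,J}$ are distinct, $\kerne\beta$ is spanned by the $\mathbf{e}_{S,J}$ with $e\in S$ and each such generator lies in $\im\alpha$, and $\beta$ is surjective because, by the first step, every generator of the target is $\beta(\mathbf{e}_{S,J})$ with $r\notin J$, or, up to sign, $\beta(\mathbf{e}_{S,J\cup r})$.

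Finally, the Zig--Zag lemma turns the sequence into the long exact sequence
\[
\cdots\to H^i\bigl(C^{\bullet-1,j}(M_\rho/e)\bigr)\to H^{i,j}(M_\rho)\to H^{i,j}(M_\rho\setminus e)\oplus H^{i,j-1}(M_\rho\setminus e)\to H^{i+1}\bigl(C^{\bullet-1,j}(M_\rho/e)\bigr)\to\cdots,
\]
in which $H^i\bigl(C^{\bullet-1,j}(M_\rho/e)\bigr)=H^{i-1,j}(M_\rho/e)$ by the degree shift, and this group vanishes for $i=0$ because there are no chain groups in negative homological degree, yielding the initial segment $0\to H^{0,j}(M_\rho)\to\cdots$; meanwhile the cohomology of the third term is $H^{i,j}(M_\rho\setminus e)\oplus H^{i,j-1}(M_\rho\setminus e)$ by the first step. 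Renaming the induced maps $\alpha^*$, $\beta^*$ and the connecting homomorphism $\gamma^*$ produces exactly the asserted sequence. I expect the first step to be the main obstacle: everything else is either formal (the Zig--Zag lemma, the vanishing of negative-degree cohomology) or a transcription of Proposition~\ref{prop:shortex}, but one must verify carefully that removing a coloop splits the complex as a direct sum of two degree-shifted copies of $C^{\bullet}(M_\rho\setminus e)$ rather than as a nontrivial extension, and that this splitting is compatible with all of the differentials simultaneously.
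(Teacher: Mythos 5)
Your proposal is correct and follows essentially the same route as the paper: choosing generators so that $\rho(E)=\rho(E\setminus e)\oplus\langle\mathbf{e}_r\rangle$, splitting $C^{S,j}(M_\rho)\cong C^{S,j}(M_\rho\setminus e)\oplus C^{S,j-1}(M_\rho\setminus e)$, using the modified $\beta$ to get the short exact sequence, and applying the Zig--Zag lemma. You simply spell out details (naturality of the splitting in $S$, the chain-map property of the modified $\beta$, and exactness in each bidegree) that the paper leaves implicit.
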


    The following corollary is a matroid version of the statement of Theorem 3.9 in \cite{HGR}.
    \begin{cor}\label{cor:coloop} 
    Under the assumption of the above theorem, we have $H^{i,j}(M, \rho)\cong H^{i,j-1}(M \setminus e, \rho_{\setminus e})\cong H^{i,j-1}(M/e, \rho_{/e})$.
    \end{cor}
    \begin{proof}
        Since, by assumption, $\rho_{\setminus e}(S) = \rho(S) \cong(\rho(S) + \rho(e))/\rho(e) = \rho_{/e}(S)$ for all $S \subset E \setminus e$, we have $(M \setminus e, \rho_{\setminus e}) \cong (M/e, \rho_{/e})$ as a matroid and a quasi-representation. 
        Thus we obtain $H^{i,j}(M \setminus e, \rho_{\setminus e}) \cong H^{i,j}(M/e, \rho_{/e})$ for each $i,j$.

        If $\gamma^\ast$ induces the isomorphism, we have $\alpha^\ast = 0$ and also obtain $\beta^\ast$ is injective. 
        By restricting the codomain to $H^{i,j-1}(M \setminus e, \rho_{\setminus e})$, we obtain isomorphism $\beta^\ast: H^{i,j}(M, \rho) \to H^{i,j-1}(M \setminus e, \rho_{\setminus e})$.

        In the rest of the proof we show that the connecting homomorphism $\gamma^\ast$ induces the isomorphism. \\ 
        Take a non-zero cocycle
        \begin{equation*}
            z = \sum_{\substack{S \in \binom{E}{i} \\ r\in J}}a_{S, J} \ebf_{S, J\setminus r}+\sum_{\substack{S \in \binom{E}{i} \\ r \notin J}}a_{S, J} \ebf_{S, J}\in \kerne d^{i,j-1}_{M \setminus e, \rho_{\setminus e}} \oplus \kerne d^{i,j}_{M \setminus e, \rho_{\setminus e}}.
        \end{equation*}
        By definition of the map $\beta$ and its surjectivity, we have $w = \sum_{S,J}a_{S, J} \ebf_{S, J}\in C^{i,j}(M, \rho)$ such that $\beta(w)=z$ and the commutativity gives
        \begin{align*}
            d^{i,j}_{(M, \rho)}(w) & =  \sum_{S,J}a_{S,J}\sum_{f \notin S}\epsilon^{S,f} \ebf_{S \cup f, J}                                                                    \\
                                     & = \sum_{S,J}a_{S,J}\epsilon^{S,e} \ebf_{S \cup e, J}+ \sum_{S,J}a_{S,J}\sum_{f \notin S \cup e}\epsilon^{S, f} \ebf_{S \cup f, J} \\
                                     & = \sum_{S,J}a_{S,J}\epsilon^{S,e} \ebf_{S \cup e, J}                                                                                   \\
                                     & = \sum_{S,r \notin J}a_{S,J}\epsilon^{S,e} \ebf_{S \cup e, J}.
        \end{align*}
        In the final row, we use $\ebf_{S\cup e,J}=0$ for $r\in J$. 
        Then, by definition of the map $\alpha$ and the exactness of the sequence
        \begin{equation*}
            0 \to C^{i,j}(M/e, \rho_{/e}) \xrightarrow{\alpha}C^{i+1,j}(M, \rho) \xrightarrow{\beta}C^{i+1,j}(M \setminus e, \rho_{\setminus e}) \oplus C^{i+1,j-1}(M \setminus e, \rho_{\setminus e}) \to 0,
        \end{equation*}
        we have
        \begin{equation*}
            w' = \sum_{\substack{S \in \binom{E}{i} \\ r \notin J}}a_{S,J}(-1)^{\abs{S}} \ebf_{S, J}=(-1)^i\sum_{\substack{S \in \binom{E}{i} \\ r \notin J}}a_{S,J} \ebf_{S, J}\in C^{i,j}(M/e, \rho_{/e})
        \end{equation*}
        with $\alpha(w') = d^{i,j}_{(M, \rho)}(w)$. 
        Note that the connecting homomorphism $\gamma^\ast: H^{i,j-1}(M \setminus e, \rho_{\setminus e}) \oplus H^{i,j}(M \setminus e, \rho_{\setminus e}) \to H^{i,j}(M/e, \rho_{/e})$ sends any elements in $H^{i,j-1}(M \setminus e, \rho_{\setminus e})$ to zero. 
        Therefore, by restricting the domain to $H^{i,j}(M \setminus e, \rho_{\setminus e})$, the connecting homomorphism $\gamma^\ast$ is $(-1)^i$ times the canonical isomorphism
        \begin{equation*}
            H^{i,j}(M \setminus e, \rho_{\setminus e}) \xrightarrow{\cong}H^{i,j}(M/e, \rho_{/e}).
        \end{equation*}
    \end{proof}

    \subsection{Loops, parallel elements, direct sum and relaxation}
    In this subsection, let us categorify the properties of loops, parallel elements, direct sum, and relaxation in matroid theory. 
    We begin with categorifying the fourth property in Proposition \ref{prop:CharPoly}, which is analogous to Propositions 3.4 in \cite{HGR} for the chromatic cohomology.
    \begin{prop}\label{prop:loop} 
    If the matroid with a quasi-representation $(M,\rho)$ has a loop, then $H^i(M,\rho) = 0$ for all $i$.
    \end{prop}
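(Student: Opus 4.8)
The plan is to produce an explicit contracting homotopy for the whole complex $\mathcal{C}(M_\rho)$: a family of $R$-module maps $h\colon C^{i,j}(M_\rho)\to C^{i-1,j}(M_\rho)$ with $d_{M_\rho}h+hd_{M_\rho}=\id$. Once this is in hand, any cocycle $z$ satisfies $z=d_{M_\rho}(h(z))$, hence is a coboundary, so $H^{i,j}(M_\rho)=0$ for all $i,j$ and therefore $H^i(M_\rho)=0$ for all $i$.

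\medskip

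\textbf{The loop facts.} Let $e$ be a loop. First I would record the two elementary consequences of $\rank_M(e)=0$ that drive the argument. By submodularity and monotonicity, $\rank_M(S\cup e)=\rank_M(S)$ (equivalently $\rank_M(S)=\rank_M(S\setminus e)$ when $e\in S$); hence, by the third axiom of a quasi-representation, $\rho(S)=\rho(S\cup e)$ for every $S$. In particular, for $e\notin S$ the differential $d_{M_\rho}^{S,e}\colon \bigwedge^{j}N/\rho(S)\to\bigwedge^{j}N/\rho(S\cup e)$ is the identity under the canonical identification of these two modules, and for $e\in S$ the modules $C^{S,j}(M_\rho)=\bigwedge^{j}N/\rho(S)$ and $C^{S\setminus e,j}(M_\rho)=\bigwedge^{j}N/\rho(S\setminus e)$ coincide. (Note also $\corank_M(E\setminus e)=0$, so $e$ is never a coloop; this makes the alternative route below available.)

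\medskip

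\textbf{The homotopy and the sign identity.} Define $h\colon C^{i,j}(M_\rho)\to C^{i-1,j}(M_\rho)$ on each summand by $h=\epsilon^{S\setminus e,e}\cdot\id$ from $C^{S,j}(M_\rho)$ to $C^{S\setminus e,j}(M_\rho)$ when $e\in S$ (legitimate by the loop facts), and $h=0$ on $C^{S,j}(M_\rho)$ when $e\notin S$; in the $\mathbf{e}_{S,J}$-notation this reads $h(\mathbf{e}_{S,J})=\epsilon^{S\setminus e,e}\,\mathbf{e}_{S\setminus e,J}$ for $e\in S$ and $h(\mathbf{e}_{S,J})=0$ for $e\notin S$. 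Then I would verify $d_{M_\rho}h+hd_{M_\rho}=\id$ by evaluating on a generator $\mathbf{e}_{S,J}$. If $e\notin S$, then $d_{M_\rho}h(\mathbf{e}_{S,J})=0$ and the only surviving summand of $hd_{M_\rho}(\mathbf{e}_{S,J})$ is the one obtained by adjoining $e$ to $S$; since $d_{M_\rho}^{S,e}$ is the identity and $h$ on $C^{S\cup e,j}$ is $\epsilon^{S,e}\cdot\id$, it contributes $(\epsilon^{S,e})^2\mathbf{e}_{S,J}=\mathbf{e}_{S,J}$. If $e\in S$, then $d_{M_\rho}h(\mathbf{e}_{S,J})$ produces the diagonal term $\mathbf{e}_{S,J}$ (re-adjoining $e$ to $S\setminus e$, again via an identity map) together with off-diagonal terms indexed by $f\notin S$, while $hd_{M_\rho}(\mathbf{e}_{S,J})$ produces matching off-diagonal terms; the whole identity then reduces to
\begin{equation*}
\epsilon^{S\setminus e,e}\,\epsilon^{S\setminus e,f}+\epsilon^{S,f}\,\epsilon^{(S\setminus e)\cup f,e}=0\qquad (f\notin S).
\end{equation*}
This follows from the relation $\epsilon^{T,a}\epsilon^{T\cup a,b}=-\epsilon^{T,b}\epsilon^{T\cup b,a}$ used in the construction (apply it with $T=S\setminus e$, $a=e$, $b=f$, and simplify using $\epsilon^{\,\cdot\,}=\pm1$), or directly from the observation that $\epsilon^{S,f}/\epsilon^{S\setminus e,f}$ and $\epsilon^{(S\setminus e)\cup f,e}/\epsilon^{S\setminus e,e}$ are $-1$ to the power of the indicator of $e<f$ and of $f<e$ respectively, whose product is $-1$ since $f\neq e$.

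\medskip

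\textbf{Conclusion and main obstacle.} The homotopy $h$ works uniformly in all bidegrees, including $i=0$ and the internal degree $j=0$ part (where $M\neq U_{0,0}$ is what guarantees $H^{i,0}=0$), so $d_{M_\rho}h+hd_{M_\rho}=\id$ gives $H^i(M_\rho)=0$ for every $i$. I expect the only genuine work to be the sign bookkeeping for the off-diagonal terms in the homotopy identity; everything else is formal. An alternative, using the machinery already developed: a loop $e$ is not a coloop, so Theorem \ref{thm:LongExactSq} applies, and one checks that $M_\rho\setminus e\cong M_\rho/e$ as matroids with quasi-representation and that the connecting map $\gamma^\ast$ is, up to the sign $(-1)^i$, the resulting canonical isomorphism $H^{i,j}(M_\rho\setminus e)\xrightarrow{\cong}H^{i,j}(M_\rho/e)$; then exactness forces $\beta^\ast=\alpha^\ast=0$, whence $H^{i,j}(M_\rho)=0$.
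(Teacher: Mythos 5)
Your proposal is correct, but your primary argument is a genuinely different route from the paper's. The paper proves the vanishing homologically: it invokes the short exact sequence of Proposition \ref{prop:shortex} (valid since a loop is never a coloop), computes the connecting homomorphism $\gamma^*$ explicitly via the snake lemma by lifting a cocycle of $C^{i,j}(M_\rho\setminus e)$ through $\beta$, and uses the fact that for a loop $e$ the generators and differentials of $C^\bullet(M_\rho\setminus e)$ and $C^\bullet(M_\rho/e)$ literally coincide to conclude that $\gamma^*$ is $(-1)^{|S|}$ times the identity; exactness of the long exact sequence of Theorem \ref{thm:LongExactSq} then forces $H^i(M_\rho)=0$. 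Your second, ``alternative'' sketch is exactly this argument. Your main argument instead builds an explicit chain contraction $h$ with $d_{M_\rho}h+hd_{M_\rho}=\id$, exploiting the same loop fact ($\rho(S)=\rho(S\cup e)$, so every $d^{S,e}_{M_\rho}$ is the identity under the canonical identification); the sign identity you reduce to, $\epsilon^{S\setminus e,e}\epsilon^{S\setminus e,f}+\epsilon^{S,f}\epsilon^{(S\setminus e)\cup f,e}=0$, does follow from the anticommutation relation used in the construction, and I verified the diagonal terms in both cases $e\in S$ and $e\notin S$. What your route buys: it is self-contained (no appeal to the exact-sequence machinery), it works at the chain level over any $R$, and it shows the stronger statement that the whole complex is contractible, hence vanishing in every bidegree $(i,j)$; what the paper's route buys is reuse of the deletion--contraction formalism, in parallel with the analogous chromatic-cohomology argument. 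One small remark: your parenthetical about $j=0$ and $M\neq U_{0,0}$ is unnecessary, since the existence of a loop already forces $E\neq\emptyset$ and your homotopy handles $j=0$ uniformly.
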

\begin{proof}
Let $e$ be a loop of $(M,\rho)$.
Since $e$ is a loop, we have $\rk_M(S)= \rk_M(S\setminus e)$ for every subset $S\subset E$ containing $e$.
Hence, by the definition of a quasi-representation, $\rho(S)= \rho(S\setminus e)$.
Therefore, we have
\begin{equation*}
    C^{S,j}(M,\rho)=C^{S\setminus e,j}(M,\rho)    
\end{equation*}
whenever $e\in S$.
Let $h^{S,j}: C^{S,j}(M,\rho) \to C^{S\setminus e,j}(M,\rho)$ be a map defined by multiplying by $\epsilon^{S,e}$ if $S\ni e$, and $h^{S,j}$ be zero map if $S\not\ni e$.
We define an $R$-module homomorphism $h^{i,j}:C^{i,j}(M,\rho) \to C^{i-1,j}(M,\rho)$ by summing up $h^{S,j}$, that is
\begin{equation*}
    h^{i,j}(\mathbf e_{S,J})=
    \begin{cases}
    \epsilon^{S,e}\mathbf e_{S\setminus e,J},
    & e\in S,\\[1ex]
    0,
    & e\notin S.
    \end{cases}
\end{equation*}

A direct computation shows that
\begin{equation*}
    d^{i-1,j}h^{i,j}+h^{i+1,j}d^{i,j}= \mathrm{id}_{C^{i,j}(M,\rho)}.
\end{equation*}
First assume that $e \notin S$.
 Then $h^{i,j}(\mathbf e_{S,J})=0$, hence we have 
 \begin{equation*}
 (d^{i-1,j}h^{i,j}+h^{i+1,j}d^{i,j})(\mathbf e_{S,J}) = h^{i+1,j} \left( \sum_{f \notin S} \epsilon^{S,f} \mathbf e_{S \cup f,J} \right) = \epsilon^{S \cup e, e} \epsilon^{S, e} \mathbf e_{S,J}.
 \end{equation*}
Since $\epsilon^{S \cup e, e} = \epsilon^{S, e}$, we get $(d^{i-1,j}h^{i,j}+h^{i+1,j}d^{i,j})(\ebf_{S,J}) = \ebf_{S,J}$.

Next assume that $e \in S$.
Then 
\begin{align*}
&\left( d^{i-1,j} h^{i,j} + h^{i+1,j} d^{i,j} \right) (\ebf_{S,J}) \\
& = d^{i-1,j} \left( \epsilon^{S,e} \ebf_{S \setminus e, J} \right) + h^{i+1,j} \left( \sum_{f \notin S} \epsilon^{S,f} \ebf_{S \cup f, J} \right)  \\
& = \epsilon^{S,e} \sum_{f \notin S \setminus e} \epsilon^{S \setminus e, f} \ebf_{(S \setminus e) \cup f, J} + \sum_{f \notin S} \epsilon^{S, f} \epsilon^{S\cup f,e} \ebf_{(S \setminus e) \cup f, J} \\
& = \sum_{f \notin S} (\epsilon^{S,e}\epsilon^{S\setminus e,f}+\epsilon^{S\cup f,e}\epsilon^{S,f}) \ebf_{(S \setminus e) \cup f, J} + \ebf_{S,J}
\end{align*}
It is easy to see that
\begin{alignat*}{4}
    \epsilon^{S,e}&= \epsilon^{S\cup f,e},&\quad& \epsilon^{S\setminus e,f}&=-\epsilon^{S,f},&\qquad&\text{if }e<f,\\
    \epsilon^{S,e}&=-\epsilon^{S\cup f,e},&\quad& \epsilon^{S\setminus e,f}&= \epsilon^{S,f},&\qquad&\text{if }e>f.
\end{alignat*}
Hence, we have $\epsilon^{S,e}\epsilon^{S\setminus e,f}+\epsilon^{S\cup f,e}\epsilon^{S,f}=0$ which implies $\left( d^{i-1,j} h^{i,j} + h^{i+1,j} d^{i,j} \right) (\ebf_{S,J})= \ebf_{S,J}$.

Thus the identity chain map $\mathrm{id}:C^{S,j}(M,\rho) \to C^{S,j}(M,\rho)$ is chain homotopic to zero.
Hence the chain complex $C^{\bullet,j}(M,\rho)$ is contractible for every $j$, and therefore $H^{i,j}(M,\rho)=0$ for all $i,j$.
Consequently, $H^i(M,\rho)=0$ for all $i$.
\end{proof}
    The following proposition categorifies the fifth property in Proposition \ref{prop:CharPoly}, which is analogous to Propositions 3.5 in \cite{HGR} for the chromatic cohomology.
    \begin{prop}\label{PropParallel} 
    If the matroid with quasi-representation $(M,\rho)$ has parallel elements $e_1,e_2$, then $H^i(M,\rho) \simeq H^i(M\setminus e_1,\rho_{\setminus e_1})\simeq H^i(M\setminus e_2,\rho_{\setminus e_2})$ for all $i$.
    \end{prop}
    \begin{proof}
        Let $(M,\rho)$ have parallel elements $e_1, e_2$. 
        Then, consider the long exact sequence given by Theorem \ref{thm:LongExactSq}.
        \begin{align*}
            0 \to H^{0}(M,\rho) & \xrightarrow{\beta^\ast}H^{0}(M\setminus e_1,\rho_{\setminus e_1}) \xrightarrow{\gamma^\ast}H^{0}(M/e_1,\rho_{/e_1}) \xrightarrow{\alpha^\ast}H^1(M,\rho)       \\
                                & \xrightarrow{\beta^\ast}H^1(M\setminus e_1,\rho_{\setminus e_1}) \xrightarrow{\gamma^\ast}H^1(M/e_1,\rho_{/e_1}) \xrightarrow{\alpha^\ast}\cdots               \\
                                & \xrightarrow{\beta^\ast}H^i(M\setminus e_1,\rho_{\setminus e_1}) \xrightarrow{\gamma^\ast}H^i(M/e_1,\rho_{/e_1}) \xrightarrow{\alpha^\ast}H^{i + 1}(M,\rho) \to \cdots.
        \end{align*}
        Note that since $\rk_{M/e_1}(e_2) = \rk_{M}(\{e_1,e_2 \}) - \rk_{M}(e_1) = 1-1 = 0$, the element $e_2$ is a loop in the contraction $(M/e_1,\rho_{/e_1})$. 
        By Proposition \ref{prop:loop}, $H^i(M/e_1,\rho_{/e_1}) = 0$ for all $i$, and thus we have $H^i(M,\rho) \simeq H^i(M\setminus e,\rho_{\setminus e})$ for all $i$.
    \end{proof}
    The following proposition categorifies the sixth property in Proposition \ref{prop:CharPoly}.
    \begin{prop}\label{prop:Kunneth} 
    Let $(M,\rho)$ and $(M',\rho')$ be matroids with quasi-representations. 
    Then $C^{\bullet}((M,\rho)\oplus (M',\rho')) = C^{\bullet}(M,\rho)\otimes C^{\bullet}(M',\rho')$ as graded chain groups. 
    In particular, if $R= \Bbbk$ is a field, then $H^{i,\bullet}((M,\rho)\oplus (M',\rho')) = \bigoplus_{l=0}^iH^{i-l,\bullet}(M,\rho)\otimes H^{l,\bullet}(M',\rho')$.
    \end{prop}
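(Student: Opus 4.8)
The plan is to establish the chain-level identity $C^\bullet(M_\rho \oplus M'_{\rho'}) = C^\bullet(M_\rho) \otimes C^\bullet(M'_{\rho'})$ first, and then deduce the cohomology statement via the algebraic Künneth theorem over a field. For the chain-level statement, I would fix the ground sets $E$ and $E'$ as disjoint, write $N = \rho(E)$ and $N' = \rho'(E')$, so that the quasi-representation of the direct sum assigns to $T = S \cup S'$ (with $S = T \cap E$, $S' = T \cap E'$) the submodule $\rho(S) \oplus \rho'(S') \subset N \oplus N'$. First I would observe that for any submodule $\rho(S) \subset N$ and $\rho'(S') \subset N'$ there is a canonical isomorphism $(N \oplus N')/(\rho(S) \oplus \rho'(S')) \cong N/\rho(S) \oplus N'/\rho'(S')$, and hence, using the standard isomorphism of exterior algebras $\bigwedge^\bullet(A \oplus B) \cong \bigwedge^\bullet A \otimes \bigwedge^\bullet B$, a graded isomorphism
\begin{equation*}
    C^{T,\bullet}(M_\rho \oplus M'_{\rho'}) = \bigwedge\nolimits^\bullet (N\oplus N')/(\rho(S)\oplus\rho'(S')) \;\cong\; \bigwedge\nolimits^\bullet N/\rho(S) \otimes \bigwedge\nolimits^\bullet N'/\rho'(S') = C^{S,\bullet}(M_\rho) \otimes C^{S',\bullet}(M'_{\rho'}).
\end{equation*}

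Next I would assemble these over all subsets. Since every subset $T$ of $E \cup E'$ decomposes uniquely as $S \sqcup S'$ with $|T| = |S| + |S'|$, summing over $\binom{E\cup E'}{i}$ and regrading by the $\bigwedge$-degree $j$ gives
\begin{equation*}
    C^{i,j}(M_\rho \oplus M'_{\rho'}) = \bigoplus_{\substack{a+b = i \\ c+d = j}} \bigoplus_{\substack{S \in \binom{E}{a} \\ S' \in \binom{E'}{b}}} C^{S,c}(M_\rho) \otimes C^{S',d}(M'_{\rho'}) = \bigoplus_{\substack{a+b=i}} \left( C^{a,\bullet}(M_\rho) \otimes C^{b,\bullet}(M'_{\rho'}) \right)^{(j)},
\end{equation*}
which is exactly the degree-$(i,j)$ part of the tensor product complex. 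It then remains to check that the differentials match: the differential on the tensor product is $d_{M_\rho} \otimes \id \pm \id \otimes d_{M'_{\rho'}}$, and on $C^\bullet(M_\rho \oplus M'_{\rho'})$ the differential adds one element $f$ of $E \cup E'$ at a time, either from $E$ (acting as $d_{M_\rho}^{S,f} \otimes \id$ on the factor) or from $E'$. The only subtle point is the sign: the coefficient $\epsilon^{T,f}$ counts elements of $T$ below $f$. If one orders $E \cup E'$ so that all of $E$ precedes all of $E'$, then for $f \in E$ we get $\epsilon^{T,f} = \epsilon^{S,f}$ directly, while for $f \in E'$ we get $\epsilon^{T,f} = (-1)^{|S|}\epsilon^{S',f}$, and the factor $(-1)^{|S|} = (-1)^a$ is precisely the Koszul sign in the tensor-product differential. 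I would spell this out to confirm the chain isomorphism is one of complexes.

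For the cohomology statement over $R = \Bbbk$, I would invoke the algebraic Künneth formula: since $\Bbbk$ is a field, all $\tor$ terms vanish, and $H^\bullet$ of a tensor product of complexes of $\Bbbk$-vector spaces is the tensor product of the cohomologies, with the total grading distributed as $H^i(M_\rho \oplus M'_{\rho'}) = \bigoplus_{l=0}^i H^{i-l}(M_\rho) \otimes H^l(M'_{\rho'})$, compatibly with the internal $j$-grading. (I note the typo in the statement: the second factor should read $H^l(M'_{\rho'})$, not $H^l(M_\rho)$.) The main obstacle — really the only one requiring genuine care rather than bookkeeping — is the sign/Koszul compatibility in the differential identification; everything else is a matter of unwinding definitions and citing the exterior-algebra and Künneth facts. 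I would also remark that this chain-level identity gives an independent categorical proof of the multiplicativity $\chi(M_1 \oplus M_2;\lambda) = \chi(M_1;\lambda)\chi(M_2;\lambda)$ from Proposition \ref{prop:CharPoly} by taking graded Euler characteristics.
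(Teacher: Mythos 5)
Your proposal is correct and matches the paper's own argument essentially step for step: the same choice of ordering (all of $E$ before $E'$), the same exterior-algebra decomposition of $\bigwedge^\bullet$ of the direct-sum quotient, the same verification that the differential satisfies $d(x\otimes x') = d_{M_\rho}(x)\otimes x' + (-1)^{|S|}x\otimes d_{M'_{\rho'}}(x')$ (which you spell out more explicitly than the paper's ``simple computation''), and the K\"unneth theorem over $\Bbbk$. You also correctly flag the typo that the second factor in the statement should be $H^{l,\bullet}(M'_{\rho'})$.
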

    \begin{proof}
        Let $E$ and $E'$ be ground sets of $M$ and $M'$. Suppose that the order on $E\sqcup E'$ is defined by $e<e'$ for $e\in E$ and $e'\in E'$. 
        Note that, for each subset $S \subset E$ and $S' \subset E'$,
        \begin{align*}
            C^{S\sqcup S',j}((M,\rho)\oplus (M',\rho')) & = \bigwedge^{j}\frac{\rho(E)\oplus \rho'(E')}{\rho(S)\oplus\rho'(S')}\cong \bigoplus_{j'=0}^{j}\bigwedge^{j-j'}\frac{\rho(E)}{\rho(S)}\otimes\bigwedge^{j'}\frac{\rho'(E')}{\rho'(S')} \\
                                                        & = \bigoplus_{j'=0}^{j}C^{S,j-j'}(M,\rho)\otimes C^{S',j'}(M',\rho')
        \end{align*}
        as graded modules. 
        Regarding the differential, for $x\in C^{S,j-j'}(M,\rho)$ and $x' \in C^{S',j'}$ $(M',\rho')$, we need to check that $d_{(M,\rho)\oplus (M',\rho')}^{\abs{S}+\abs{S'},j}(x\otimes x')=d_{(M,\rho)}(x)\otimes x'+(-1)^{\abs{S}}x\otimes d_{(M',\rho')}(x')$. 
        However, this relation is obtained from the definition of differential by simple computation. Hence $C^{\bullet}((M,\rho)\oplus (M',\rho')) = C^{\bullet}(M,\rho)\otimes C^{\bullet}(M',\rho')$ as graded chain groups. 
        When $R= \Bbbk$, the proposition follows from the K\"unneth theorem.
    \end{proof}

    We close the subsection with the categorification of the last property in Proposition \ref{prop:CharPoly}.

    \begin{thm}\label{thm:relax}
    Let $(M,\rho)$ be a matroid of rank $r$, and let $(M^{-},\rho^{-})$ be its relaxation obtained by relaxing a circuit-hyperplane $S_0$.
    Then
    \begin{equation*}
    H^{i,j}(M,\rho)\cong H^{i,j}(M^{-},\rho^{-})
    \end{equation*}
    for $i\neq r-1$.
    Moreover, for every $j\geq 1$, there is a short exact sequence
    \begin{equation*}
    0\longrightarrow H^{r-1,j}(M,\rho) \longrightarrow H^{r-1,j}(M^{-},\rho^{-}) \longrightarrow \bigwedge^j \left(N/\rho(S_0)\right) \longrightarrow 0.
    \end{equation*}
    In particular, if $N/\rho(S_0)$ is a free $R$-module, the above sequence splits, and hence
    \begin{align*}
    H^{r-1,1}(M^{-},\rho^{-}) &\cong H^{r-1,1}(M,\rho)\oplus N/\rho(S_0), \\
    H^{r-1,j}(M^{-},\rho^{-}) &\cong H^{r-1,j}(M,\rho) \qquad (j\neq 1).
    \end{align*}
    \end{thm}
    \begin{proof}
    When $j=0$, for every $S\subset E$, we have
    \begin{equation*}
    C^{S,0}(M,\rho) = \bigwedge^0\left(N/\rho(S)\right)=R= \bigwedge^0\left(N/\rho^{-}(S)\right)=C^{S,0}(M^{-},\rho^{-}).
    \end{equation*}
    Moreover, the differentials in degree $j=0$ agree, since every induced map on the zeroth exterior power is the identity map on $R$. 
    Hence we have $C^{\bullet,0}(M,\rho)=C^{\bullet,0}(M^{-},\rho^{-})$, thus $H^{i,0}(M,\rho)\cong H^{i,0}(M^{-},\rho^{-})$ for all $i$.
    Hereafter we assume $j\ge 1$.
    
    The only difference between $\rho$ and $\rho^{-}$ occurs at the subset $S_0$, where $\rho^{-}(S_0)=N$. Hence, for $S\neq S_0$, we have $C^{S,j}(M,\rho)=C^{S,j}(M^{-},\rho^{-})$.
    Since $\abs{S_0} = r$, the chain groups differ only in degree $r$.
    Moreover, the differentials differ only for $d^{r-1,j}:C^{r-1,j}\to C^{r,j}$ and $d^{r,j}:C^{r,j}\to C^{r+1,j}$.
    Therefore
    \begin{equation*}
        H^{i,j}(M,\rho)\cong H^{i,j}(M^{-},\rho^{-})
    \end{equation*}
    for $i\neq r-1,r,r+1$.
    
    First, we show that the cohomology groups also agree for $i=r$ and $i=r+1$.
    Since $S_0$ is a circuit-hyperplane, for every $e\in E\setminus S_0$ we have $\rho(S_0\cup e)= \rho^{-}(S_0\cup e)=N$.
    Hence we have $C^{S_0\cup e,j}(M,\rho)=C^{S_0\cup e,j}(M^{-},\rho^{-})=0$.
    Thus, we have $\im d^{r,j}_{(M,\rho)} = \im d^{r,j}_{(M^-,\rho^-)}$.
    Moreover,
    \begin{equation*}
        C^{r,j}(M,\rho) = C^{r,j}(M^{-},\rho^{-})\oplus \bigwedge^j \left( N/\rho(S_0) \right).
    \end{equation*}
    
    The differential $d^{r,j}_{(M,\rho)}$ vanishes on $C^{S_0,j}(M,\rho) = \bigwedge^j\left(N/\rho(S_0)\right)$.
    Indeed, for every $e \in E \setminus S_0$, we have $\rho(S_0\cup e) = \rho(E)$ and therefore $C^{S_0\cup e,j}(M,\rho)=0$.
    Thus, we have
    \begin{equation*}
        \kerne d^{r,j}_{(M,\rho)}
        = \kerne d^{r,j}_{(M^{-},\rho^{-})} \oplus \bigwedge^j \left( N/\rho(S_0) \right).
    \end{equation*}
    On the other hand, $\bigwedge^j\left(N/\rho(S_0)\right) \subset \im d^{r-1,j}_{(M,\rho)}$.
    Indeed, for any $e_0\in S_0$ the map $d^{S_0\setminus e_0,e_0}: C^{S_0\setminus e_0,j}(M,\rho) \longrightarrow C^{S_0,j}(M,\rho) = \bigwedge^j\left(N/\rho(S_0)\right)$ is the natural projection, up to the sign $\epsilon^{S_0\setminus e_0,e_0}$, and hence is surjective.
    Since the differentials $d^{r-1,j}_{(M,\rho)}$ and $d^{r-1,j}_{(M^{-},\rho^{-})}$ coincide on all components except those involving $C^{S_0,j}(M,\rho) = \bigwedge^j\left(N/\rho(S_0)\right)$, we obtain
    \begin{equation*}
    \im d^{r-1,j}_{(M,\rho)} = \im d^{r-1,j}_{(M^{-},\rho^{-})} \oplus \bigwedge^j\left(N/\rho(S_0)\right).
    \end{equation*}
    Consequently,
    \begin{equation*}
        H^{r,j}(M,\rho)\cong H^{r,j}(M^{-},\rho^{-}).
    \end{equation*}
    The same equality of images in degree $r+1$ gives
    \begin{equation*}
        H^{r+1,j}(M,\rho)\cong H^{r+1,j}(M^{-},\rho^{-}).
    \end{equation*}
    
    It remains to check degree $r-1$.
    Since the differentials
    $d^{r-2,j}_{(M,\rho)}$ and $d^{r-2,j}_{(M^{-},\rho^{-})}$ coincide, we have $\im d^{r-2,j}_{(M,\rho)}  = \im d^{r-2,j}_{(M^{-},\rho^{-})}$.
    
    Let
    \begin{equation*}
        q: \kerne d^{r-1,j}_{(M^{-},\rho^{-})} \hookrightarrow C^{r-1,j}(M^-,\rho^-)=C^{r-1,j}(M,\rho)\xrightarrow{d_M} C^{r,j}(M,\rho)\twoheadrightarrow \bigwedge^j\left(N/\rho(S_0)\right)
    \end{equation*}
    where $d_M=d^{r-1,j}_{(M,\rho)}$ and the last map is the natural projection $C^{r,j}(M,\rho) \to C^{S_0,j}(M,\rho)$.
    To show the exactness of the sequence
    \begin{equation}\label{eq:relax}
    0 \longrightarrow H^{r-1,j}(M,\rho) \longrightarrow H^{r-1,j}(M^{-},\rho^{-}) \longrightarrow \bigwedge^j \left( N/\rho(S_0) \right) \longrightarrow 0,
    \end{equation}
    we show that
    \begin{equation}\label{eq:relax2}
        0\to \kerne d^{r-1,j}_{(M,\rho)}\hookrightarrow \kerne d^{r-1,j}_{(M^{-},\rho^{-})}\xrightarrow{q} \bigwedge^j\left(N/\rho(S_0)\right) \to 0
    \end{equation}
    is exact.
    
    Since the maps $d^{r-1,j}_{(M,\rho)}$ and $d^{r-1,j}_{(M^{-},\rho^{-})}$ differ only in the map
    \begin{equation*}
    d^{S_0\setminus e,e}: C^{S_0\setminus e,j}(M,\rho) \to C^{S_0,j}(M,\rho) = \bigwedge^j\left(N/\rho(S_0)\right)
    \end{equation*}
    for every $e\in S_0$, we have $\kerne d^{r-1,j}_{(M,\rho)} = \kerne q$.
    
    Now, we show $q$ is surjective.
    Let $\bar{x}\in \bigwedge^j \left( N/\rho(S_0) \right)$. 
    Choose $e_0\in S_0$ and take an element $x$ in the preimage of $\bar{x}$ under the natural projection $\bigwedge^j\left(N/\rho(S_0\setminus e_0)\right)=C^{S_0\setminus e_0,j}(M^{-},\rho^{-}) \to \bigwedge^j\left(N/\rho(S_0)\right)$.
    We claim that $x$ is a cocycle in $C^{\bullet,j}(M^{-},\rho^{-})$.
    Indeed, if we add $e_0$ to $S_0\setminus e_0$, then
    \begin{equation*}
        C^{S_0,j}(M^{-},\rho^{-}) = \bigwedge^j(N/\rho^{-}(S_0)) = 0.
    \end{equation*}
    If we add an element $e\in E\setminus S_0$, then, since $S_0$ is a circuit-hyperplane, we have $\rho^{-}((S_0\setminus e_0)\cup e) = N$, which implies 
    \begin{equation*}
        C^{(S_0\setminus e_0)\cup e,j}(M^{-},\rho^{-}) = \bigwedge^j(N/\rho^{-}((S_0\setminus e_0)\cup e)) = 0.
    \end{equation*}
    Hence we have $x\in \kerne d^{r-1,j}_{(M^{-},\rho^{-})}$.
    By construction, the image of $x$ in $\bigwedge^j \left( N/\rho(S_0) \right)$ is
    $\pm\bar{x}$, and thus $q$ is surjective.
    
    Moreover, we have $\im d^{r-2,j}_{(M^{-},\rho^{-})} \subset \kerne q$.
    Indeed, since $d^{r-2,j}_{(M,\rho)} = d^{r-2,j}_{(M^{-},\rho^{-})}$ and $d^{r-1,j}_{(M,\rho)} \circ d^{r-2,j}_{(M,\rho)} = 0$, we have $q\circ d^{r-2,j}_{(M^{-},\rho^{-})} = 0$.
    Thus $q$ induces a homomorphism 
    \begin{equation*} 
    \bar q: H^{r-1,j}(M^{-},\rho^{-}) \longrightarrow \bigwedge^j\left(N/\rho(S_0)\right). 
    \end{equation*}
    Since \eqref{eq:relax2} is exact and $\im d^{r-2,j}_{(M,\rho)} = \im d^{r-2,j}_{(M^{-},\rho^{-})}$, we obtain the desired sequence \eqref{eq:relax}.
    
    Finally, if $N/\rho(S_0)$ is free, then, since $S_0$ is a circuit-hyperplane, we have $N/\rho(S_0)\cong R$.
    Hence the short exact sequence for $j=1$ splits, and 
    \begin{align*}
    H^{r-1,1}(M^{-},\rho^{-}) &\cong H^{r-1,1}(M,\rho)\oplus N/\rho(S_0),\\
    H^{r-1,j}(M^{-},\rho^{-}) &\cong H^{r-1,j}(M,\rho) \qquad (j\neq 1).\qedhere
    \end{align*}
    \end{proof}

    \section{Sample computations}\label{sec:computation} 
    In this section we will calculate examples of uniform matroids. Recall that a uniform matroid $U_{k,n}$ is defined on the $n$ element set $E= \{ 1,\dots,n\}$ by the rank function $\rk_{U_{k,n}}(S)= \min(k,\abs{S})$. 
    Also note that, for any matroid $M$ on $E$ and quasi-representation, $C^{i,0}(M,\rho)= \bigoplus_{S\in \binom{E}{i}}R$ for $0\leq i\leq\abs{E}$. 
    Hence if $M\neq U_{0,0}$, then $H^{i,0}(M,\rho)=0$ for all $i$.

    \subsection{Canonical quasi-representations for uniform matroids}
    Among uniform matroids, the regular matroids are $U_{0,n},U_{1,n},U_{n-1,n},U_{n,n}$. 
    Throughout this subsection, we assume quasi-representation is canonical and simply write $M$ instead of $(M,\rho)$.

    \begin{prop}\label{prop:sample-loops} 
    Let $n$ be an integer greater than $0$. 
    Then,
        \begin{align*}
            H^{i,j}(U_{0,0}) & \cong \left\{\begin{aligned}R&&\mbox{ if } i = j = 0 \\ 
            0 & & \mbox{ otherwise,}\end{aligned}\right. \\
            H^{i,j}(U_{0,n}) & \cong 0 \quad \text{for all } i,j.
        \end{align*}
    \end{prop}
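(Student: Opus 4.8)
The plan is to read off both chain complexes directly from the definitions, exploiting the fact that $\rank(U_{0,n}) = 0$ forces the ambient free module $N$ to be the zero module.

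For $U_{0,0}$ the ground set $E$ is empty, so the unique subset is $S = \emptyset$ and $N = \rho(\emptyset) = \rho(E)$ has rank $0$, i.e.\ $N = 0$. Hence $C^{\emptyset,j}(U_{0,0}) = \bigwedge^j(N/\rho(\emptyset)) = \bigwedge^j 0$, which is $R$ for $j=0$ and $0$ for $j>0$; moreover $\binom{E}{i} = \emptyset$ for $i > 0$. So the entire chain complex is concentrated in bidegree $(0,0)$, where it equals $R$, with all differentials zero, and reading off cohomology yields the first formula.

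For $U_{0,n}$ with $n > 0$ I would split on $j$. When $j > 0$, again $N = 0$, so every $C^{i,j}(U_{0,n}) = \bigwedge^j 0 = 0$ and hence $H^{i,j}(U_{0,n}) = 0$ for free. When $j = 0$, one can simply invoke the observation recorded just before the proposition, namely that $H^{i,0}(M_\rho) = 0$ for every $M \neq U_{0,0}$ — the strand $C^{\bullet,0}(M_\rho)$ being, for any nonempty ground set, the acyclic reduced simplicial cochain complex of the full simplex on $E$, since each $d^{S,e}$ restricts in homological degree $0$ to the identity of $\bigwedge^0 = R$. A more structural alternative for $j = 0$: every element of $U_{0,n}$ is a loop, since $\rank_{U_{0,n}}(e) = \min(0,1) = 0$, so Proposition \ref{prop:loop} gives $H^i(U_{0,n}) = 0$ for all $i$ at once.

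I do not expect a genuine obstacle here; the only point requiring care is the bookkeeping that rank $0$ makes $N = 0$ while $\bigwedge^0 0 = R \neq 0$. This is exactly what preserves a single surviving class in bidegree $(0,0)$ for $U_{0,0}$, and, by contrast, what makes the $j = 0$ strand of $U_{0,n}$ a nontrivial but acyclic complex rather than the zero complex.
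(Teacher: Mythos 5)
Your argument is correct and matches the paper's proof: the $U_{0,0}$ case is the same direct inspection of the chain complex (only $S=\emptyset$ survives and $\bigwedge^0 0=R$), and your "structural alternative" for $U_{0,n}$ — every element is a loop, so Proposition \ref{prop:loop} kills the cohomology — is exactly what the paper does. Your extra $j$-splitting computation for $U_{0,n}$ is a fine, slightly more hands-on redundancy, but it is not a different method in substance.
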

    \begin{proof}
        Since $C^{i,j}(U_{0,0})=0$ for all $(i,j) \neq (0,0)$, it follows that $H^{0,0}(U_{0,0})=R$ and $H^{i,j}(U_{0,0})=0$ for all $(i,j) \neq (0,0)$. 
        The rest follows from Proposition \ref{prop:loop}.
    \end{proof}

    \begin{prop}\label{ExamU1n} 
    Let $n$ be an integer greater than $0$. 
    Then,
        \begin{equation*}
            H^{i,j}(U_{1,n})\cong \left\{
            \begin{aligned}
                R &  & \mbox{ if }i=0,j=1 \\
                0 &  & \mbox{ otherwise.}
            \end{aligned}\right.\\
        \end{equation*}
    \end{prop}
    \begin{proof}
        By Proposition \ref{PropParallel}, it is sufficient to prove the case $n=1$. 
        Since $C^{0,0}(U_{1,1})=C^{1,0}(U_{1,1})=C^{0,1}(U_{1,1})=R$ and $C^{i,j}(U_{1,1})=0$ for the other $(i,j)$, the claim follows. 
        Hence we obtain $H^{0,1}(U_{1,1})=R$ and $H^{i,j}(U_{1,1})=0$ for all $(i,j) \neq (0,1)$.
    \end{proof}

    \begin{prop}\label{ExamUnn} 
    Let $n$ be an integer greater than $0$. 
    Then,
        \begin{equation*}
            H^{i,j}(U_{n,n})\cong \left\{
            \begin{aligned}
                R &  & \mbox{ if }i=0,j=n \\
                0 &  & \mbox{ otherwise.}
            \end{aligned}\right.\\
        \end{equation*}
    \end{prop}
    \begin{proof}
        Since $U_{n,n}$ consists of $n$ coloops, the proof follows from Corollary \ref{cor:coloop} and Proposition \ref{ExamU1n}.
    \end{proof}

    \begin{prop}
        Let $n$ be an integer greater than $0$. 
        Then,
        \begin{equation}
            \label{eq:circuit-hypo}H^{i,j}(U_{n-1,n})\cong \left\{
            \begin{aligned}
                R &  & \mbox{ if }i+j=n-1\mbox{ and }i\geq 0, j>0 \\
                0 &  & \mbox{ otherwise.}
            \end{aligned}\right.\\
        \end{equation}
    \end{prop}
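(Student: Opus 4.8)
The plan is an induction on $n$ via the deletion--contraction long exact sequence of Theorem~\ref{thm:LongExactSq}. Two reductions come for free. First, since $U_{n-1,n}\neq U_{0,0}$, the general observation recorded just before this section gives $H^{i,0}(U_{n-1,n})=0$ for all $i$, so only the rows $j\geq 1$ can be nonzero. Second, $N$ has rank $n-1$, and for the canonical quasi-representation of $U_{n-1,n}$ every module $N/\rho(S)$ is free of rank $\corank(S)\leq n-1$, so $\bigwedge^{j}N/\rho(S)=0$ whenever $j\geq n$; hence $C^{i,j}(U_{n-1,n})=0$ and $H^{i,j}(U_{n-1,n})=0$ for $j\geq n$. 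It therefore suffices to compute $H^{i,j}(U_{n-1,n})$ for $1\leq j\leq n-1$ and show that it is $R$ exactly on the line $i+j=n-1$; this is the precise statement that will be inducted.

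The base case is $n=1$: the element $1$ is a loop of $U_{0,1}$, so $H^{i,j}(U_{0,1})=0$ for all $i,j$ by Proposition~\ref{prop:sample-loops}. For the inductive step take $e=n$. For $n\geq 2$ one has $\rank_{U_{n-1,n}}(e)=1$ and $\corank_{U_{n-1,n}}(E\setminus e)=0$, so $e$ is neither a loop nor a coloop and Theorem~\ref{thm:LongExactSq} applies. Comparing rank functions gives $U_{n-1,n}\setminus e\cong U_{n-1,n-1}$ and $U_{n-1,n}/e\cong U_{n-2,n-1}$. Using the totally unimodular representation $[\,I_{n-1}\mid\mathbf 1\,]$ of $U_{n-1,n}$, one checks that $\rho_{\setminus e}$ comes from $I_{n-1}$ and that $\rho_{/e}$ comes from reducing $[\,I_{n-1}\mid\mathbf 1\,]$ modulo its last column $\langle\mathbf 1\rangle$, which again has all entries in $\{-1,0,1\}$; thus both are canonical quasi-representations and, by the essential uniqueness of representations of regular matroids, $H^\bullet(U_{n-1,n}\setminus e)$ and $H^\bullet(U_{n-1,n}/e)$ are the groups of Proposition~\ref{ExamUnn} and of the induction hypothesis, respectively: $H^{i,j}(U_{n-1,n-1})$ is $R$ at $(0,n-1)$ and $0$ otherwise, and $H^{i,j}(U_{n-2,n-1})$ is $R$ on $i+j=n-2$, $j\geq 1$, and $0$ otherwise.

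Now fix $j$ with $1\leq j\leq n-1$ and run the long exact sequence
\begin{equation*}
\cdots\to H^{i,j}(U_{n-1,n})\xrightarrow{\beta^*}H^{i,j}(U_{n-1,n-1})\xrightarrow{\gamma^*}H^{i,j}(U_{n-2,n-1})\xrightarrow{\alpha^*}H^{i+1,j}(U_{n-1,n})\to\cdots .
\end{equation*}
If $1\leq j\leq n-2$, then $H^{i,j}(U_{n-1,n-1})=0$ for every $i$, so $\alpha^*\colon H^{i,j}(U_{n-2,n-1})\xrightarrow{\sim}H^{i+1,j}(U_{n-1,n})$ for all $i\geq 0$ and $H^{0,j}(U_{n-1,n})$ embeds in $H^{0,j}(U_{n-1,n-1})=0$; by the induction hypothesis this yields $H^{i,j}(U_{n-1,n})\cong R$ precisely when $i+j=n-1$, and $0$ otherwise. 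If $j=n-1$, then $H^{i,n-1}(U_{n-2,n-1})=0$ for all $i$ (the degree $n-1$ exceeds $\rank U_{n-2,n-1}=n-2$), so the sequence forces $H^{0,n-1}(U_{n-1,n})\cong H^{0,n-1}(U_{n-1,n-1})\cong R$ and $H^{i,n-1}(U_{n-1,n})=0$ for $i\geq 1$, again matching $i+j=n-1$. Combining this with the two reductions of the first paragraph completes the induction.

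The step I expect to be the main obstacle is not the homological bookkeeping but the verification that $\rho_{\setminus e}$ and, in particular, $\rho_{/e}$ are canonical quasi-representations of $U_{n-1,n-1}$ and $U_{n-2,n-1}$, so that Proposition~\ref{ExamUnn} and the induction hypothesis genuinely apply; this rests on fixing the explicit representation $[\,I_{n-1}\mid\mathbf 1\,]$ and checking that passing to the quotient by $\rho(e)=\langle\mathbf 1\rangle$ preserves the property of having entries in $\{-1,0,1\}$. Everything else is routine given Theorem~\ref{thm:LongExactSq}, Proposition~\ref{ExamUnn}, and the vanishing of $H^{i,0}$.
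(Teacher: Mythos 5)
Your proof is correct and follows essentially the same route as the paper: induction on $n$ using the long exact sequence of Theorem \ref{thm:LongExactSq} for a non-coloop element, with the deletion $U_{n-1,n}\setminus e\cong U_{n-1,n-1}$ handled by Proposition \ref{ExamUnn}, the contraction $U_{n-1,n}/e\cong U_{n-2,n-1}$ by the induction hypothesis, and the base case being the loop matroid $U_{0,1}$ via Proposition \ref{prop:sample-loops}. The extra care you take is welcome rather than redundant: the paper applies Proposition \ref{ExamUnn} and the induction hypothesis to $M_\rho\setminus e$ and $M_\rho/e$ without checking that the induced quasi-representations are (equivalent to) the canonical ones, which is precisely the point you settle with the representation $[\,I_{n-1}\mid\mathbf{1}\,]$ and the observation that reducing modulo $\langle\mathbf{1}\rangle$ again yields entries in $\{-1,0,1\}$. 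Note also that your reduction $H^{i,0}(U_{n-1,n})=0$ shows the displayed formula must be read with $j\geq 1$: the bigrade $(n-1,0)$ vanishes, as the graded Euler characteristic $\chi(U_{n-1,n};1+q)$ also forces, so the literal ``$i,j\geq 0$'' in the statement (and likewise the degenerate case $n=1$) is a slip in the paper, and the version you prove is the correct one and is what the paper's own argument actually produces.
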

    \begin{proof}
        We show the result by induction on $n$. 
        By Proposition \ref{prop:sample-loops}, $H^{i,j}(U_{0,1})=0$ for all $i,j$. 
        Next, suppose \eqref{eq:circuit-hypo} holds for some $n$. 
        Note that $U_{n,n+1}\setminus e\cong U_{n,n}$ and $U_{n,n+1}/ e\cong U_{n-1,n}$ for all elements $e$ in $U_{n,n+1}$. 
        By Theorem \ref{thm:LongExactSq}, we obtain a long exact sequence;
        \begin{align*}
            0 \to H^{0,j}(U_{n,n+1}) & \xrightarrow{\beta^\ast}H^{0,j}(U_{n,n}) \xrightarrow{\gamma^\ast}H^{0, j}(U_{n-1,n}) \xrightarrow{\alpha^\ast}H^{1,j}(U_{n,n+1})                 \\
                                     & \xrightarrow{\beta^\ast}H^{1,j}(U_{n,n}) \xrightarrow{\gamma^\ast}H^{1, j}(U_{n-1,n}) \xrightarrow{\alpha^\ast}\cdots                             \\
                                     & \xrightarrow{\beta^\ast}H^{i,j}(U_{n,n}) \xrightarrow{\gamma^\ast}H^{i, j}(U_{n-1,n}) \xrightarrow{\alpha^\ast}H^{i + 1,j}(U_{n,n+1}) \to \cdots.
        \end{align*}
        By Proposition \ref{ExamUnn} and the induction hypothesis, we obtain the following exact sequences
        \begin{align*}
             & 0 \to H^{0,j}(U_{n,n+1}) \xrightarrow{\beta^\ast} 0 \xrightarrow{\gamma^\ast} 0 \xrightarrow{\alpha^\ast}H^{1,j}(U_{n,n+1}) \xrightarrow{\beta^\ast} 0 & \mbox{ for }j<n-1,            \\
             & 0 \to H^{0,j}(U_{n,n+1}) \xrightarrow{\beta^\ast} 0 \xrightarrow{\gamma^\ast}R \xrightarrow{\alpha^\ast}H^{1,j}(U_{n,n+1}) \xrightarrow{\beta^\ast} 0 & \mbox{ for }j=n-1,            \\
             & 0 \to H^{0,n}(U_{n,n+1}) \xrightarrow{\beta^\ast}R \xrightarrow{\gamma^\ast} 0 \xrightarrow{\alpha^\ast}H^{1,n}(U_{n,n+1}) \xrightarrow{\beta^\ast} 0 & \mbox{ for }j=n,              \\
             & 0 \xrightarrow{\gamma^\ast}H^{i,j}(U_{n-1,n}) \xrightarrow{\alpha^\ast}H^{i + 1,j}(U_{n,n+1}) \xrightarrow{\beta^\ast} 0                          & \mbox{ for }i\geq 1, j> 0.
        \end{align*}
        Thus we obtain $H^{0,j}(U_{n,n+1})=0$ for $j\leq n-1$, 
        $H^{1,j}(U_{n,n+1})\cong 0$ for $j\neq n-1$, $H^{1,n-1}(U_{n,n+1})\cong R$, $H^{0,n}(U_{n,n+1}) \cong R$ and $R\cong H^{i + 1,j}(U_{n,n+1})$ for $i + j + 1 = n, i > 1, j > 0$.
    \end{proof}

    \subsection{Non-canonical quasi-representations for $U_{2,2}$}
    In this subsection, we fix a coefficient ring to the integer ring $\Zb$ and construct an example in which torsion appears.

    \begin{prop}
        Let $M$ be a uniform matroid $U_{2,2}$ on $E= \{ 1,2 \}$ and let $a,b$ be non-zero integers. 
        Define the $\Zb$-quasi-representation $\rho_{(a,b)}$ by
        \begin{align*}
             & N= \rho_{(a,b)}(E) = \Zb^2,        &  & \rho_{(a,b)}(\emptyset) = 0,               \\
             & \rho_{(a,b)}(\{ 1\}) = \left<(a,0)\right>, &  & \rho_{(a,b)}(\{2 \}) = \left<(0,b)\right>.
        \end{align*}
        Then
        \begin{align*}
            H^{1,1}(M,\rho_{(a,b)}) & \cong \Zb/a\Zb \oplus \Zb/b \Zb, \\
            H^{0,2}(M,\rho_{(a,b)}) & \cong \Zb,                       \\
            H^{1,2}(M,\rho_{(a,b)}) & \cong \Zb/\gcd(a,b)\Zb,          \\
        \end{align*}
        and the rest vanish.
    \end{prop}
    \begin{proof}
        It is sufficient to check the cases $0\leq i\leq 2$ and $0\leq j\leq 2$. 
        Since the case $j=0$ is obvious, first, let us check the case $j=1$. 
        By the definition,
        \begin{align*}
            C^{0,1}(M,\rho_{(a,b)}) & = C^{\emptyset,1}(M,\rho_{(a,b)})= \Zb^2;                                                              \\
            C^{1,1}(M,\rho_{(a,b)}) & = C^{\{ 1\},1}(M,\rho_{(a,b)})\oplus C^{\{2 \},1}(M,\rho_{(a,b)})                                                \\
                                    & = \left(\Zb/a\Zb \oplus\Zb\right)\oplus\left(\Zb \oplus\Zb/b\Zb\right); \\
            C^{2,1}(M,\rho_{(a,b)}) & = C^{E,1}(M,\rho_{(a,b)})=0.
        \end{align*}
        Because $d_{M,\rho_{(a,b)}}^{0,1}(x,y)=(x+a\Zb,y,x,y+b\Zb)$, we obtain
        \[
            H^{0,1}(M,\rho_{(a,b)}) = \kerne d_{M,\rho_{(a,b)}}^{0,1}=0
        \]
        and
        \begin{equation*}
            H^{1,1}(M,\rho_{(a,b)})= \frac{\Zb/a\Zb \oplus\Zb \oplus\Zb \oplus\Zb/b\Zb}{\im d_{M,\rho_{(a,b)}}^{0,1}}\cong \Zb/a\Zb \oplus \Zb/b\Zb.
        \end{equation*}

        Next, we see the case $j=2$. 
        By the definition,
        \begin{align*}
            C^{0,2}(M,\rho_{(a,b)}) & = C^{\emptyset,2}(M,\rho_{(a,b)})= \bigwedge^2\Zb^2\cong \Zb,                                                                                                                       \\
            C^{1,2}(M,\rho_{(a,b)}) & = C^{\{ 1\},2}(M,\rho_{(a,b)})\oplus C^{\{2 \},2}(M,\rho_{(a,b)})                                                                                                                                      \\
                                    & = \left(\bigwedge^2(\Zb/a\Zb \oplus\Zb)\right)\oplus\left(\bigwedge^2(\Zb \oplus\Zb/b\Zb)\right)\cong\Zb/a\Zb \oplus \Zb/b\Zb, \\
            C^{2,2}(M,\rho_{(a,b)}) & = C^{E,2}(M,\rho_{(a,b)})= \bigwedge^20=0
        \end{align*}
        where we use the isomorphism
        \begin{align*}
            \bigwedge^2(\Zb/a\Zb \oplus\Zb) & = \frac{(\Zb/a\Zb \oplus\Zb)\otimes(\Zb/a\Zb \oplus\Zb)}{((x_1,y_1)\otimes (x_2,y_2)+(x_2,y_2)\otimes (x_1,y_1))} \\
                                                                  & \cong \frac{\Zb/a\Zb\otimes\Zb \oplus\Zb\otimes\Zb/a\Zb}{(x_1\otimes y_2+y_1\otimes x_2)}                                \\
                                                                  & \cong \frac{\Zb/a\Zb \oplus\Zb/a\Zb}{(x_1+x_2)}\cong \Zb/a\Zb
        \end{align*}
        and similarly $\bigwedge^2(\Zb \oplus\Zb/b\Zb)\cong \Zb/b\Zb$. 
        Then $d_{M,\rho_{(a,b)}}^{0,2}(x)=(x+a\Zb,x+b\Zb)$. 
        Hence we obtain $H^{0,2}(M,\rho_{(a,b)}) = \kerne d_{M,\rho_{(a,b)}}^{0,2}\cong a\Zb\cap b\Zb\cong \Zb$ and $H^{1,2}(M,\rho_{(a,b)})\cong\Zb/\gcd(a,b)\Zb$.
    \end{proof}
    \section{Relations to other cohomology theories}\label{sec:relation}
    \subsection{Relation to the characteristic cohomology of hyperplane arrangements}
    In this subsection, we show that the characteristic cohomology of matroids is isomorphic to the characteristic cohomology of hyperplane arrangements given by Dancso and Licata \cite{DL}.
    Let $\Bbbk$ be a field of characteristic zero and let $V$ be a finite-dimensional $\Bbbk$-vector space endowed with a non-degenerate inner product.
    Let $\nu_e\ (e\in E)$ be vectors in $V$ and let $\mathcal{A} = \{ H_e\mid e\in E\}$ be the associated essential hyperplane arrangement, where $H_e= \nu_e^\perp$.
    For $S\subset [n]$, set $H_S= \bigcap_{e \in S} H_e$.
    Let $M(\mathcal A)$ be the matroid associated with $\mathcal{A}$.
    Define a $\Bbbk$-quasi-representation $\rho_\mathcal{A}$ of $M(\mathcal{A})$ by
    \begin{equation*}
        \rho_\mathcal{A}(S) = \left< \nu_e \mid e \in S \right>_\Bbbk \subset V.
    \end{equation*}
    For $S \subset [n]$ and $e \notin S$, let $p_{S,e}: H_S\to H_{S\cup e}$ be the orthogonal projection.
    This map induces an algebra map
    \begin{equation*}
        d_{arr}^{S,e}: \bigwedge^\bullet H_S \to \bigwedge^\bullet H_{S\cup e}.
    \end{equation*}
    For each $S\subset [n]$, we have $H_S=(\rho_\mathcal{A}(S))^\perp$.
    By the inner product, we have a natural isomorphism $H_S \cong V/\rho_\mathcal{A}(S)$.
    Moreover, under this identification, the orthogonal projection
    $p_{S,e}: H_S \to H_{S\cup e}$ corresponds to the natural projection $V/\rho_\mathcal{A}(S) \to V/\rho_\mathcal{A}(S\cup e)$.

    By taking exterior powers, we obtain a $\Bbbk$-module isomorphism
    \begin{equation*}
        \varphi_S: C_{arr}^{S,j}(\mathcal A,\Bbbk) = \bigwedge^j H_S \longrightarrow \bigwedge^j(V/\rho_\mathcal{A}(S)) = C^{S,j}(M(\mathcal A),\rho_\mathcal{A}).
    \end{equation*}
    
    Since the orthogonal projection $p_{S,e}: H_S\to H_{S\cup e}$ corresponds to the natural projection $V/\rho_\mathcal{A}(S) \to V/\rho_\mathcal{A}(S\cup e)$,
    the following diagram commutes:
    \begin{equation*}
        \xymatrix{
        C_{arr}^{S,j}(\mathcal A,\Bbbk) \ar[r]^{d_{arr}^{S,e}} \ar[d]_{\varphi_S} & C_{arr}^{S\cup e,j}(\mathcal A,\Bbbk) \ar[d]^{\varphi_{S\cup e}} \\
        C^{S,j}(M(\mathcal A),\rho_\mathcal{A}) \ar[r]^{d^{S,e}} & C^{S\cup e,j}(M(\mathcal A),\rho_\mathcal{A}).
        }
    \end{equation*}
    Therefore the chain complexes
    $\mathcal C_{arr}(\mathcal A,\Bbbk)$ and $\mathcal C(M(\mathcal A),\rho_\mathcal{A})$ are isomorphic.
    
    \begin{thm}\label{thm:cha_arr}
    Let $H_{arr}^{\bullet}(\mathcal{A}, \Bbbk)$ be the characteristic cohomology of an essential arrangement $\mathcal A$ over $\Bbbk$, and let $H^{\bullet}(M(\mathcal{A}),\rho_\mathcal{A})$ be the characteristic cohomology of the associated matroid with the above $\Bbbk$-quasi-representation. 
    Then
    \begin{equation*}
        H_{arr}^{\bullet}(\mathcal{A}, \Bbbk) \simeq H^{\bullet}(M(\mathcal{A}),\rho_\mathcal{A}).
    \end{equation*}
    \end{thm}

    \subsection{Relation to chromatic cohomology}
    In this subsection, we connect the characteristic cohomology of matroids to the chromatic cohomology given by Helme--Guizon and Rong \cite{HGR} via the characteristic cohomology of essentialized graphic arrangement.

    Let us review the construction of the chromatic cohomology. 
    Let $G = (V(G), E(G))$ be a graph with an order on $E(G)$. 
    We write $\mathrm{Conn}(S)$ for the set of connected components of the spanning graph $[G : S]$ for all subset $S \subset E(G)$.

    Let $c:\mathrm{Conn}(S) \to \{ 1,x\}\subset\Zb[x]/(x^2)$.
    For the labeled spanning graph $(S, c)$, called the \textbf{enhanced state}, define
    \begin{equation*}
        i(S) = \abs{S}, \ \text{and}\ j(S) = \big|\{ F \in \mathrm{Conn}(S) \mid c(F) = x\}\big|.
    \end{equation*}
    Then, define $C^{i,j}(G) = \left< (S,c) \mid S\subset E(G), i(S) = i, j(S) = j \right>$.
    This gives a chain group
    \begin{equation*}
        C^i(G) = \bigoplus_{j \geq 0}C^{i,j}(G)
    \end{equation*}
    graded by $j$. For simplicity, we will identify $[G:S]$ with an edge set $S$. 
    We define the differential
    \begin{equation*}
        d_{G}^{i,j}: C^{i,j}(G) \to C^{i+1,j}(G)
    \end{equation*}
    as
    \begin{equation*}
        d_{G}^{i,j}((S,c)) = \sum_{e \in E(G) \setminus S}\epsilon^{S,e}(S \cup e, c_{e})
    \end{equation*}
    where $n(S,e) = \abs{\{ e' \in S \mid e' < e \}}$, and $(S \cup e, c_{e})$ is defined as follows. 
    If $e$ is a bridge connecting two connected components $F_1$ and $F_2$ of $[G : S]$, then we define it by $c_{e}(F_1\cup F_2\cup e) = c(F_1) c(F_2)$, and $c_{e}(F_{3}) = c(F_{3})$, $F_{3}\neq F_1,F_2$.
    Note that the multiplication of labels is taken in the ring $\mathbb Z[x]/(x^2)$.
    In particular, $x^2=0$, and hence if both connected components are labeled by $x$, then the resulting enhanced state vanishes, i.e. $(S\cup e,c_e)=0$ in $C^{i+1,j}(G)$.
    If $e$ connects $F_1$ itself, then we define it by $c_{e}(F_1\cup e) = c(F_1)$, and $c_{e}(F_{3}) = c(F_{3})$, for all $F_{3}\neq F_1$.
    The map defined above gives a differential
    \begin{equation*}
        d_{G}^i= \bigoplus_{j \geq 0}d_{G}^{i,j}.
    \end{equation*}
    Thus we obtain a chain complex $\mathcal{C}(G) = (C^{\bullet}(G), d^{\bullet})$. 
    The resulting cohomology $H^{\bullet}(G)$ is the chromatic cohomology of graphs.
    \begin{thm}[\cite{HGR} Theorem 2.11] 
    The cohomology $H^{\bullet}(G)$ has a graded Euler characteristic equal to the chromatic polynomial evaluated by $1 + q$:
        \begin{equation*}
            \chi_{q}(H^{\bullet}(G)) = \chi(G, 1 + q).
        \end{equation*}
    \end{thm}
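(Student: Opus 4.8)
The plan is to evaluate the graded Euler characteristic directly on the chain groups and then recognise the outcome as the rank (Whitney) expansion of the chromatic polynomial. First I would use the standard fact that for a chain complex of finitely generated abelian groups the alternating sum of ranks is a cohomological invariant: fixing an internal degree $j$ and applying rank--nullity to the map $d_G^{i,j}\colon C^{i,j}(G)\to C^{i+1,j}(G)$, the image terms appear with opposite signs and telescope in the alternating sum, so
\[
  \sum_i (-1)^i \rank C^{i,j}(G) \;=\; \sum_i (-1)^i \rank H^{i,j}(G).
\]
In particular any torsion that the coefficient data $\mathbb{Z}[x]/(x^2)$ forces into $H^{\bullet,j}(G)$ is invisible here. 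Multiplying this identity by $q^j$ and summing over $j$ reduces the theorem to computing $\sum_{i,j}(-1)^i q^j \rank C^{i,j}(G)$ and recognising it as $\chi(G,1+q)$.

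Next I would count the ranks of the chain groups. By construction $C^{i,j}(G)$ is free abelian on the enhanced states $(S,c)$ with $|S|=i$ and $c\colon \mathrm{Conn}(S)\to\{1,x\}$ taking the value $x$ on exactly $j$ of the connected components of the spanning graph $[G:S]$; for a fixed edge set $S$ with $|S|=i$ there are exactly $\binom{k([G:S])}{j}$ such labellings, where $k([G:S])=|\mathrm{Conn}(S)|$. Summing the binomial identity $\sum_{j\ge 0}\binom{k}{j}q^j=(1+q)^k$ over all $S\subset E(G)$ gives
\[
  \sum_{i,j}(-1)^i q^j \rank C^{i,j}(G) \;=\; \sum_{S\subset E(G)} (-1)^{|S|} (1+q)^{k([G:S])}.
\]

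Finally I would identify the right-hand side with the chromatic polynomial $P(G;1+q)$. Combining $P(G;\lambda)=\lambda^{k(G)}\chi(M(G);\lambda)$ with the definition $\chi(M(G);\lambda)=\sum_{S\subset E(G)}(-1)^{|S|}\lambda^{\corank_{M(G)}(S)}$ and the equality $\corank_{M(G)}(S)=k([G:S])-k(G)$ (immediate from $\rank_{M(G)}(S)=|V(G)|-k([G:S])$) yields the Whitney expansion $P(G;\lambda)=\sum_{S\subset E(G)}(-1)^{|S|}\lambda^{k([G:S])}$. Substituting $\lambda=1+q$ matches the displayed sum exactly, so $\chi_q(H^\bullet(G))=P(G;1+q)=\chi(G,1+q)$, as claimed.

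The argument is essentially bookkeeping and I do not anticipate a genuine obstacle; the single point meriting care is the first step, where one must confirm that the enhanced-state description really presents each $C^{i,j}(G)$ as a free abelian group, so that ``$\rank$'' is additive along the complex and the torsion of $H^\bullet(G)$ cancels out of the alternating sum. An alternative route would be an induction on $|E(G)|$ via the deletion--contraction long exact sequence of \cite{HGR}, but the direct chain-level count above is shorter.
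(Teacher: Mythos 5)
Your proposal is correct and is essentially the standard argument used for this result in \cite{HGR} (which the present paper only cites, without reproducing a proof): since each $C^{i,j}(G)$ is free on enhanced states, the graded Euler characteristic is computed on the chain level, giving $\sum_{S\subset E(G)}(-1)^{|S|}(1+q)^{k([G:S])}$, which is the Whitney rank expansion of the chromatic polynomial at $\lambda=1+q$. Your care about freeness of the chain groups and the rank-additivity over $\mathbb{Z}$ is exactly the right point to check, and it holds here.
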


    Note that by Theorem 2.12 in \cite{HGR} the cohomology $H^{\bullet}(G)$ does not depend on the order of $E(G)$. 
    Without loss of generality fix a total order on the vertex set $V(G)= \{v_1<\dots<v_{\abs{V}}\}$. 
    Also fix a total order on the edge set $E(G)$ satisfying the following condition: 
    for two edges $e, e' \in E(G)$ with end points $u$, $v$ and $u', v'$ respectively, define $e<e'$ if $u<u'$, or $u=u'$ and $v<v'$.

    Let $\tilde{N}= \bigoplus_{v\in V(G)}\Zb v$ and let $I_{G}= \left< \min(V(F))\mid F\in \mathrm{Conn}(E(G))\right>\subset \tilde{N}$. 
    Note that $v_1\in I_{G}$.
    Define a $\Zb$-quasi-representation of a graphic matroid $M(G)$ by
    \begin{align*}
        \tilde\rho(S) & = \left< v-v'\mid v,v'\mbox{are the endpoints of some edge in }S\right>\subset\tilde{N}, \\
        \rho(S)       & = (\tilde\rho(S)+I_{G})/I_{G},                                                            \\
        N             & = \rho(E(G)).
    \end{align*}
    Then $\rho$ is a $\Zb$-canonical quasi-representation of $M(G)$. 
    In the rest of this subsection, we will omit the canonical quasi-representation $\rho$ and write $M(G)$ as $(M(G),\rho)$.

    By definition, $\tilde{N}/\tilde\rho(S)$ is the quotient module spanned by the set of vertices of $G$ by the relation in which a path exists between the vertices. 
    Thus $\tilde{N}/\tilde\rho(S)$ is isomorphic to the free module spanned by the set $\mathrm{Conn}(S)$ of connected components of $[G:S]$. 
    For $F\in \mathrm{Conn}(S)$, let $v_{S,F}$ denote the minimal vertex in $V(F)$. 
    We define a $\Zb$-module homomorphism as follows:
    \begin{equation*}
        \begin{aligned}
            \eta_{0}:C^{i,j}(G) \to & \bigoplus_{S\in\binom{E(G)}{i}}\bigwedge^{j}\left(\tilde{N}/\tilde{\rho}(S)\right) \\
            (S,c)\mapsto           & v_{S,c^{-1}(x)}= v_{S,F_1}\wedge\dots\wedge v_{S,F_j}
        \end{aligned}
    \end{equation*}
    where $\{F_1,\dots,F_{j}\}= \{F\in \mathrm{Conn}(S)\mid c(F)=x\}$ and $\min V(F_1)<\dots<\min V(F_{j})$. 
    Then the above homomorphism $\eta_{0}$ is an isomorphism.

    By definition, we have
    \begin{equation*}
        \frac{N}{\rho(S)}= \frac{(\tilde{\rho}(E(G))+ I_{G})/I_{G}}{(\tilde{\rho}(S)+ I_{G})/I_{G}}\cong\frac{\tilde{N}}{\tilde{\rho}(S)+ I_{G}}
    \end{equation*}
    where we use $\tilde{N}= \tilde{\rho}(E(G))+ I_{G}$ for the last isomorphism. 
    Thus we have the following isomorphism:
    \begin{equation*}
        \eta_1:\bigoplus_{S\in\binom{E(G)}{i}}\bigwedge^{j}\frac{\tilde{N}}{\tilde{\rho}(S)+I_{G}}\xrightarrow{\simeq}\bigoplus_{S\in\binom{E(G)}{i}}\bigwedge^{j}\frac{N}{\rho(S)}=C^{i,j}(M(G)).
    \end{equation*}

    Let $\pi:\bigoplus_{S\in\binom{E(G)}{i}}\bigwedge^{j}\left(\tilde{N}/\tilde{\rho}(S)\right) \to \bigoplus_{S\in\binom{E(G)}{i}}\bigwedge^{j}\left(\tilde{N}/(\tilde{\rho}(S)+I_{G})\right)$ be the wedge of the natural projections and let $\theta$ be the composition $\eta_1\circ\pi\circ\eta_{0}:C^{i,j}(G) \to C^{i,j}(M(G))$.
    \begin{lem}
        The map $\theta^{i,j}:C^{i,j}(G) \to C^{i,j}(M(G))$ is a chain map.
    \end{lem}
    \begin{proof}
        Let us show $\theta^{i+1,j}\circ d^{i,j}_{G}=d^{i,j}_{M(G)}\circ\theta^{i,j}$. 
        Let $(S,c)$ be an enhanced state. If $F,F'\in c^{-1}(x)$ and $e\in E$ is an edge connecting $F$ and $F'$, then we have $c_{e}(F\cup F'\cup e)=0$ and $(v_{F}+I_{G}+\tilde{\rho}(S\cup e))\wedge(v_{F'}+I_{G}+\tilde{\rho}(S\cup e))=0$. 
        Therefore it is sufficient to consider edges which do not connect components colored by $x$. 
        Then we obtain
        \begin{align*}
            \theta\circ d^{i,j}_{G}((S,c)) & = \theta\left(\sum_{e\in E(G)\setminus S}\epsilon^{S,e}(S\cup e,c_{e})\right)                                                               \\
                                           & = \sum_{e\in E(G)\setminus S}\epsilon^{S,e}\eta_1\circ\pi\circ\eta_{0}(S\cup e,c_{e})                                                     \\
                                           & = \sum_{e\in E(G)\setminus S}\epsilon^{S,e}\eta_1\circ\pi(v_{S\cup e,c_e^{-1}(x)})                                                        \\
                                           & = \sum_{e\in E(G)\setminus S}\epsilon^{S,e}(v_{S,F_1}+I_{G}+\tilde{\rho}(S\cup e))\wedge\dots\wedge (v_{S,F_j}+I_{G}+\tilde{\rho}(S\cup e)) \\
                                           & = d^{i,j}_{M(G)}\left((v_{S,F_1}+I_{G}+\tilde{\rho}(S))\wedge\dots\wedge (v_{S,F_j}+I_{G}+\tilde{\rho}(S))\right)                           \\
                                           & = d^{i,j}_{M(G)}(\eta_1\circ\pi(v_{S\cup e,c_e^{-1}(x)}))                                                                                 \\
                                           & = d^{i,j}_{M(G)}\circ \theta((S,c)).\qedhere
        \end{align*}
    \end{proof}

    \begin{lem}\label{thm: Homomorph} 
        There exists a homomorphism $\theta^\ast: H^\ast(G) \to H^\ast(M(G))$ induced by $\theta$.
    \end{lem}

    Furthermore, if the graph $G$ is connected, we are able to obtain a short exact sequence. 
    Let $\tau^{i,j-1}:C^{i,j-1}(M(G)) \to C^{i,j}(G)$ be a map defined by
    \begin{align*}
         & \tau^{i,j-1}\left((v_{S,F_1}+I_{G}+\tilde{\rho}(S))\wedge\dots\wedge (v_{S,F_{j-1}}+I_{G}+\tilde{\rho}(S))\right)                 \\
         & = \left\{\begin{aligned}(S,c^{\tau})&&\min(\mathrm{Conn}(S))\notin\{F_1,\dots,F_{j-1}\}\\ 0&&\mbox{otherwise}\\\end{aligned}\right.
    \end{align*}
    where $\min(\mathrm{Conn}(S))$ denotes the connected component containing the smallest vertex among all connected components of $[G:S]$, and $c^{\tau}(F)=x$ if and only if $F\in\{F_1,\dots,F_{j-1}\}$ or $F= \min(\mathrm{Conn}(S))$. 
    Then $\tau$ is a chain map because
    \begin{align*}
         & \tau^{i,j-1}\circ d^{i,j-1}\left((v_{S,F_1}+I_{G}+\tilde{\rho}(S))\wedge\dots\wedge (v_{S,F_{j-1}}+I_{G}+\tilde{\rho}(S))\right)                                                 \\
         & = \tau^{i,j-1}\left(\sum_{e\in E(G)\setminus E(S)}\epsilon^{S,e}\left((v_{S,F_1}+I_{G}+\tilde{\rho}(S\cup e))\wedge\dots\wedge (v_{S,F_{j-1}}+I_{G}+\tilde{\rho}(S\cup e))\right)\right) \\
         & = \sum_{e\in E(G)\setminus S}\epsilon^{S,e}(S\cup e,(c_{e})^{\tau}) = \sum_{e\in E(G)\setminus S}\epsilon^{S,e}(S\cup e,(c^{\tau})_{e})                                    \\
         & = d_{G}^{i,j}(S,c^{\tau})=d_{G}^{i,j}\circ\tau^{i,j}(S,c)
    \end{align*}
    for $\min(\mathrm{Conn}(S))\notin\{F_1,\dots,F_{j-1}\}$.

    \begin{lem}
        Let $G$ be a connected graph. 
        Then there exists a short exact sequence
        \begin{equation*}
            0\to C^{i,j-1}(M(G)) \xrightarrow{\tau^{i,j-1}}C^{i,j}(G)\xrightarrow{\theta}C^{i,j}(M(G)) \to 0.
        \end{equation*}
    \end{lem}
    \begin{proof}
        Let $\{F_1,\dots,F_{j-1}\}\subset\mathrm{Conn}(S)$ with $\min(V(F_1))<\dots<\min(V(F_{j-1}))$. 
        Then, by definition, we have
        \begin{align*}
             & \theta\circ\tau^{i,j-1}\left((v_{S,F_1}+I_{G}+\tilde{\rho}(S))\wedge\dots\wedge (v_{S,F_{j-1}}+I_{G}+\tilde{\rho}(S))\right)                          \\
             & = (v_{S,\min(\mathrm{Conn}(S))}+I_{G}+\tilde{\rho}(S))\wedge (v_{S,F_1}+I_{G}+\tilde{\rho}(S))\wedge\dots\wedge (v_{S,F_{j-1}}+I_{G}+\tilde{\rho}(S)) \\
             & = (I_{G}+\tilde{\rho}(S))\wedge (v_{S,F_1}+I_{G}+\tilde{\rho}(S))\wedge\dots\wedge (v_{S,F_{j-1}}+I_{G}+\tilde{\rho}(S))                              \\
             & = 0
        \end{align*}
        where we use $v_{S,\min(\mathrm{Conn}(S))}\in I_{G}$ in the third line. 
        Then, we need to show $\kerne \theta^{i,j}\subset\im \tau$. 
        Suppose $\sum_{(S,c)}a_{(S,c)}(S,c)\in \kerne \theta^{i,j}$. 
        Since $G$ is connected, $\kerne (\tilde{N}/\tilde{\rho}(S) \to N/\rho(S))$ is generated by $v_{S,\min(\mathrm{Conn}(S))}$. 
        Hence, we have that $\kerne \theta^{i,j}$ is generated by labeled spanning graphs $(S,c)$ such that $c(\min(\mathrm{Conn}(S)))=x$. 
        On the other hand, an enhanced state $(S,c)$ satisfying such conditions is an image of $(v_{S,F_1}+I_{G}+\tilde{\rho}(S))\wedge\dots\wedge (v_{S,F_{j-1}}+I_{G}+\tilde{\rho}(S))$ by $\tau$ where $\{F_{\min(\mathrm{Conn}(S))},F_1,\dots,F_{j-1}\} =c^{-1}(x)$.
    \end{proof}

    \begin{thm}\label{cor:long_ex} 
        Let $G$ be a connected graph. 
        Then there is a long exact sequence
        \begin{equation*}
            \cdots\to H^{i,j-1}(M(G)) \to H^{i,j}(G) \to H^{i,j}(M(G)) \to H^{i+1,j-1}(M(G)) \to \cdots.
        \end{equation*}
    \end{thm}
    \begin{rem}
        Similar to the above long exact sequence, we can also obtain a long exact sequence
        \begin{equation*}
            \cdots\to H^{i,j-1}_{arr}(\mathcal{A}_{ess}) \to H^{i,j}_{arr}(\mathcal{A}) \to H^{i,j}_{arr}(\mathcal{A}_{ess}) \to H^{i+1,j-1}_{arr}(\mathcal{A}_{ess}) \to \cdots
        \end{equation*}
        for a hyperplane arrangement $\mathcal{A}$ with a one-dimensional center and its essentialization $\mathcal{A}_{ess}= \{H/\bigcap_{H\in \mathcal{A}}H\mid H\in \mathcal{A}\}$.
    \end{rem}

    For every graphic matroid $M(G)$, there exists some connected graph $G'$ such that $M(G)\cong M(G')$. 
    By combining the above with Corollary 13 in \cite{HGPR}, we obtain the following corollary.
    \begin{cor}
        Let $G$ be a connected graph with $n$ vertices. 
        If $i + j < n - 1$, then
        \begin{equation*}
            H^{i,j}(M(G))\cong H^{i+1,j-1}(M(G)).
        \end{equation*}
    \end{cor}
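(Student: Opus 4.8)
The plan is to combine the long exact sequence of Theorem \ref{thm:long_ex} with the known vanishing range for chromatic cohomology. First I would pass to a connected graph: since every graphic matroid $M(G)$ equals $M(G')$ for some connected $G'$ (contract a spanning forest's worth of vertices, or more simply take $G'$ to be any connected graph on the same cycle structure), and since the cohomology $H^{\bullet}(M(G))$ depends only on the matroid, I may assume $G$ itself is connected with $n = |E(G)|$ edges. This lets me apply Theorem \ref{thm:long_ex}.

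Next I would invoke Corollary 13 of \cite{HGPR}, which (in the range relevant here) asserts a vanishing statement for the chromatic cohomology $H^{i,j}(G)$ of a connected graph: namely $H^{i,j}(G) = 0$ whenever $i+j < n-1$ (this is exactly the ``$H^{i,j}(G)$ lies on or above the diagonal $i+j = n-1$'' phenomenon for connected graphs). I would then feed this into the long exact sequence
\begin{equation*}
    \cdots\to H^{i,j}(G)\to H^{i,j}(M(G))\xrightarrow{\ \delta\ }  H^{i+1,j-1}(M(G))\to H^{i+1,j}(G)\to\cdots.
\end{equation*}
If both $i+j < n-1$ and $(i+1)+(j-1) = i+j < n-1$ — which is automatic, since the two indices coincide — then the two flanking chromatic terms $H^{i,j}(G)$ and $H^{i+1,j}(G)$ both vanish (the latter because $(i+1)+j$: wait — one must check this index). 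Here is the one point needing care: the term to the right of $H^{i+1,j-1}(M(G))$ in the sequence is $H^{i+1,j}(G)$, whose total degree is $i+j+1$, which is $< n-1$ only if $i+j < n-2$. So the clean exactness argument gives the isomorphism under the slightly stronger hypothesis $i+j \le n-2$; to recover the stated $i+j < n-1$ one must argue the boundary case $i+j = n-2$ separately, using that $H^{i+1,j}(G)$ still injects appropriately or that the relevant connecting map is forced to be an isomorphism by a rank count. The main obstacle is therefore reconciling the exact index range of Corollary 13 in \cite{HGPR} with the range claimed in the corollary; I expect this is handled either because \cite{HGPR}'s statement is in fact $H^{i,j}(G)=0$ for $i+j < n-1$ together with a supplementary injectivity in total degree $n-1$, or because the corollary's hypothesis should be read with the conventions of that reference.

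Granting the vanishing $H^{i,j}(G) = 0 = H^{i+1,j}(G)$ in the asserted range, the long exact sequence immediately collapses to
\begin{equation*}
    0 \to H^{i,j}(M(G)) \xrightarrow{\ \delta\ } H^{i+1,j-1}(M(G)) \to 0,
\end{equation*}
so $\delta$ is an isomorphism and the corollary follows. The only genuine content beyond bookkeeping is the reduction to the connected case (routine, via matroid-invariance of $H^\bullet(M(G))$) and the correct citation of the chromatic vanishing range; everything else is diagram-chasing the sequence of Theorem \ref{thm:long_ex}.
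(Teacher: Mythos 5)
Your overall strategy --- pass to a connected representative $G'$, apply the long exact sequence of Theorem \ref{thm:long_ex}, and kill the chromatic terms via the vanishing statement quoted from \cite{HGPR} --- is exactly the route the paper intends, but as written your argument has two genuine problems. First, you set $n=|E(G)|$. With that reading both the vanishing you invoke and the corollary itself are false: chromatic cohomology of a connected graph is supported on the diagonals $i+j=|V(G')|-1$ and $i+j=|V(G')|$, and, for instance, if $G'$ consists of two vertices joined by three parallel edges then $M(G')\cong U_{1,3}$, $|E(G')|=3$, and $H^{0,1}(U_{1,3})\cong R$ while $H^{1,0}(U_{1,3})=0$, even though $0+1<|E(G')|-1$. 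So the $n$ in the corollary must be the number of vertices of the connected representative $G'$ (equivalently $\rank M(G)+1$), which is the quantity appearing in Corollary 13 of \cite{HGPR}; your citation and bookkeeping need to be set up in those terms.

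Second, you correctly observe that the right-hand flanking term $H^{i+1,j}(G')$ sits in total degree $i+j+1$, which can land on the nonvanishing diagonal when $i+j=n-2$, but you leave that boundary case unresolved (``a rank count'', ``supplementary injectivity''), so the proof does not close on the full stated range. The gap is closable without any information about $H^{i+1,j}(G')$: exactness of $H^{i,j}(G')\to H^{i,j}(M(G))\xrightarrow{\ \delta\ }H^{i+1,j-1}(M(G))$ already makes $\delta$ injective at every point of a diagonal of total degree $c<n-1$, because the left-hand chromatic group vanishes there. Chaining these injections along the diagonal gives $H^{i,j}(M(G))\hookrightarrow H^{i+1,j-1}(M(G))\hookrightarrow\cdots\hookrightarrow H^{c,0}(M(G))=0$, where the last group vanishes since the $j=0$ cohomology is zero for any matroid other than $U_{0,0}$. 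Hence every group on such a diagonal is zero, and the asserted isomorphism holds (both sides vanish); your ``clean'' argument for $i+j\le n-3$ is correct but unnecessary once you argue this way.
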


\bibliographystyle{amsalpha}
\bibliography{matroid_hlg}

\end{document}